\newtheorem{theorem}{Theorem}[section]
\newtheorem{lemma}[theorem]{Lemma}
\newtheorem{proposition}[theorem]{Proposition}
\newtheorem{corollary}[theorem]{Corollary}
\newtheorem{remark}[theorem]{Remark}
\def\r{\mathbb{R}}
\def\rn{\mathbb{R}^N}
\def\zn{\mathbb{Z}^N}
\def\eps{\varepsilon}
\def\rh{\rightharpoonup}
\def\irn{\int_{\rn}}
\def\io{\int_{\Omega}}
\def\wt{\widetilde}
\def\wh{\widehat}
\def\la{\langle}
\def\ra{\rangle}
\def\lam{\lambda}
\def\lam1{\lambda_1}
\def\d12{\mathcal{D}^{1,2}}
\def\cm{\mathcal{M}}
\def\meas{\text{meas}}
\numberwithin{equation}{section}
\title[Generalized Nehari manifold]{Generalized Nehari manifold and semilinear Schr\"odinger equation with weak monotonicity condition on the nonlinear term}
\author{Francisco Odair de Paiva}\thanks{The first author was supported by FAPESP under the grant 2015/10545-0. This work was done while he was visiting the Mathematics Department of Stockholm University. He would like to thank the members of the Department for hospitality and a stimulating scientific atmosphere. }
\address{Universidade Federal de S\~ao Carlos, Departamento de Matem\'atica, 13565-905 S\~ao Carlos, Brazil}
\email{odair@dm.ufscar.br}
\author{Wojciech Kryszewski}\thanks{The second author was partially supported by the Polish National Science Center
under grant 2013/09/B/ST1/01963.
}
\address{Faculty of Mathematics and Computer Sciences, Nicolaus Copernicus University, Chopina 12/18, 87-100 Toru\'n, Poland}
\email{wkrysz@mat.umk.pl}
\author{Andrzej Szulkin} 
\address{Department of Mathematics, Stockholm University, 106 91  Stockholm, Sweden}
\email{andrzejs@math.su.se}
\subjclass[2010]{35J20, 35J60, 58E30}
\keywords {Generalized Nehari manifold, Schr\"odinger equation, strongly indefinite functional, Clarke's subdifferential}
\begin{document}

\baselineskip15pt

\maketitle

\begin{abstract}
We study the Schr\"odinger equations $-\Delta u + V(x)u = f(x,u)$ in $\rn$ and $-\Delta u - \lambda u = f(x,u)$ in a bounded domain $\Omega\subset\rn$. We assume that $f$ is superlinear but of subcritical growth and $u\mapsto f(x,u)/|u|$ is nondecreasing. In $\rn$ we also assume that $V$ and $f$ are periodic in $x_1,\ldots,x_N$. We show that these equations have a ground state and that there exist infinitely many solutions if $f$ is odd in $u$. Our results generalize those in \cite{sw1} where $u\mapsto f(x,u)/|u|$ was assumed to be strictly increasing. This seemingly small change forces us to go beyond methods of smooth analysis.
\end{abstract}

\section{introduction} \label{intro}

We consider the semilinear Schr\"odinger equations
\begin{equation} \label{se}
-\Delta u + V(x)u = f(x,u), \quad u\in H^1(\rn)
\end{equation}
and
\begin{equation} \label{se1}
-\Delta u - \lambda u = f(x,u), \quad u\in H^1_0(\Omega),
\end{equation}
where $\Omega\subset\rn$ is a bounded domain and $H^1(\rn)$, $H_0^1(\Omega)$ are the usual Sobolev spaces. In both problems we make the following assumptions on $f$:
\begin{itemize}
\item[$(F_1)$] $f$ is continuous and $|f(x,u)| \le C(1+|u|^{p-1})$ for some $C>0$ and $p\in(2,2^*)$, where $2^*:=2N/(N-2)$ if $N\ge 3$ and $2^*:=+\infty$ if $N=1$ or 2,
\item[$(F_2)$] $f(x,u)=o(u)$ uniformly in $x$ as $u \to 0$,
\item[$(F_3)$] $F(x,u)/u^2 \to\infty$ uniformly in $x$ as $|u|\to\infty$, where $F(x,u) := \int_0^uf(x,s)\,ds$,
\item[$(F_4)$] $u \mapsto f(x,u)/|u|$ is non-decreasing on $(-\infty,0)$ and on $(0,\infty)$.
\end{itemize}
The assumptions $(F_1)$--$(F_3)$ appear in \cite{sw1} while a condition corresponding to $(F_4)$ is a little stronger there: 
\begin{itemize}
\item[$(F_4')$] $u \mapsto f(x,u)/|u|$ is strictly increasing on $(-\infty,0)$ and on $(0,\infty)$.
\end{itemize}
 As we shall see, this slightly weaker hypothesis will force us to go beyond methods of smooth analysis, and introducing a non-smooth approach in this context is in fact our main purpose. In what follows we shall frequently refer to different results and arguments in \cite{sw1, sw2}. When such reference is made, it should be understood that no stronger conditions than $(F_1)$--$(F_4)$ were needed there. 
 
 The main results of this paper are the following two theorems:

\begin{theorem} \label{thm1}
Suppose $f$ satisfies $(F_1)$--$(F_4)$, $V$ and $f$ are 1-periodic in $x_1,\ldots,x_N$ and $0\notin \sigma(-\Delta+V)$, where $\sigma(\cdot)$ denotes the spectrum in $L^2(\rn)$. Then equation \eqref{se} has a ground state solution. If moreover $f$ is odd in $u$, then equation \eqref{se} has infinitely many pairs of geometrically distinct solutions. 
\end{theorem}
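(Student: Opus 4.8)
The plan is to recast \eqref{se} variationally and then to build a \emph{non-smooth} analogue of the Szulkin--Weth reduction to the unit sphere of the positive spectral subspace. Put $E:=H^1(\rn)$; since $0\notin\sigma(-\Delta+V)$ there is a splitting $E=E^+\oplus E^-$ into the positive and negative spectral subspaces of $-\Delta+V$, orthogonal for an equivalent inner product, and the energy functional
\[
\Phi(u)=\tfrac12\|u^+\|^2-\tfrac12\|u^-\|^2-\irn F(x,u)
\]
is of class $C^1$ on $E$ by $(F_1)$--$(F_2)$, has as critical points exactly the weak solutions of \eqref{se}, and is invariant under the $\zn$-action $u\mapsto u(\cdot-k)$ by periodicity of $V$ and $f$; "geometrically distinct" means "lying in distinct orbits of this action". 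First I would record, using only $(F_1)$--$(F_4)$ exactly as in \cite{sw1}, the elementary consequences of the monotonicity: $f(x,u)u\ge0$, $s\mapsto F(x,su)/s^2$ non-decreasing on $(0,\infty)$ and on $(-\infty,0)$, $\tfrac12f(x,u)u-F(x,u)\ge0$, and --- since $f(x,\cdot)$ is then non-decreasing --- $F(x,\cdot)$ convex. These yield the linking geometry of $\Phi$ and allow me to introduce the generalized Nehari manifold $\mathcal N=\{u\in E\setminus E^-:\Phi'(u)u=0,\ \Phi'(u)v=0\ \text{for all }v\in E^-\}$, on which $\Phi$ is positive and bounded below away from $0$, and which contains every nontrivial critical point of $\Phi$.

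The heart of the matter is the fibering over $S^+:=\{w\in E^+:\|w\|=1\}$. For fixed $w\in S^+$ and $t\ge0$ I would first maximize $v\mapsto\Phi(tw+v)$ over $v\in E^-$: this is the sum of the strictly concave $-\tfrac12\|v\|^2$ and, by convexity of $F$, the concave $-\irn F(x,tw+v)$, and it is coercive ($F\ge0$), so its maximum is attained at a \emph{unique} $v=h(t,w)$, depending continuously on $(t,w)$, with $h(0,w)=0$. Hence only the remaining one-dimensional maximization in $t$ can fail to have a unique solution, and this is precisely where the weakening from $(F_4')$ to $(F_4)$ is felt: $t\mapsto\xi(t,w):=\Phi(tw+h(t,w))$ attains its maximum at some $t>0$ (by $(F_3)$ and the linking geometry, as in \cite{sw1}), but its argmax $[t_1(w),t_2(w)]$ need no longer reduce to a point, so
\[
\psi(w):=\max_{t\ge0,\,v\in E^-}\Phi(tw+v)
\]
is well defined and continuous on $S^+$, even and $\zn$-invariant when $f$ is odd, but \emph{not} $C^1$. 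I would then show $\psi$ is locally Lipschitz, the maximizing set $M(w)=\{tw+h(t,w):t\in[t_1(w),t_2(w)]\}$ being bounded uniformly near each point (the coercivity estimate from $(F_3)$); since $\Phi(tw+v)$ is smooth in $w$ for fixed $(t,v)$, the Clarke formula for subdifferentials of pointwise maxima of smooth families gives
\[
\partial\psi(w)\subseteq\overline{\mathrm{conv}}\,\bigl\{\,t\,P_W\Phi'(tw+h(t,w)):t\in[t_1(w),t_2(w)]\,\bigr\},
\]
with $P_W$ the projection onto $W:=T_wS^+=w^{\perp}\cap E^+$ (the $w$- and $E^-$-components of $\Phi'$ already vanish on $M(w)$).

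With this machinery the argument follows the scheme of \cite{sw1, sw2}, gradients being replaced by Clarke subdifferentials. For the ground state one has $c:=\inf_{S^+}\psi=\inf_{\mathcal N}\Phi>0$; a minimizing sequence $w_n$ gives $u_n\in M(w_n)\subset\mathcal N$ that is bounded and non-vanishing (else $\Phi(u_n)\to0$), so by Lions' lemma and periodicity, after translating by $k_n\in\zn$, $u_n\rightharpoonup u_0\ne0$, and a Fatou-type estimate with the maximizing property of points of $\mathcal N$ gives $\Phi(u_0)=c$; it then remains to see $u_0$ is a genuine critical point --- the delicate non-smooth step: $w_0:=u_0^+/\|u_0^+\|$ minimizes the locally Lipschitz $\psi$ on $S^+$, so $0\in\partial\psi(w_0)$ in the manifold sense, and one must promote this --- via a non-smooth deformation lemma on $S^+$, showing that otherwise $\psi$ could be pushed below $c$ near $w_0$ --- to the existence of $t\in[t_1(w_0),t_2(w_0)]$ with $P_W\Phi'(tw_0+h(t,w_0))=0$, i.e. $\Phi'(u)=0$ for some $u\in M(w_0)$; together with $\Phi(u_0)=c\le$ (energy of any nontrivial critical point), such a $u$ is a ground state. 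For infinitely many solutions when $f$ is odd, $\psi$ is even and $S^+$ is the unit sphere of an infinite-dimensional space, so one runs the equivariant minimax $c_k:=\inf\{\sup_A\psi:A\subset S^+\ \text{compact, symmetric},\ \mathrm{genus}(A)\ge k\}$; one shows $c_1\le c_2\le\cdots\to\infty$ and, arguing by contradiction on the assumption that $\Phi$ has only finitely many geometrically distinct nontrivial critical points (so that a Cerami condition holds modulo the $\zn$-action), uses the non-smooth equivariant deformation lemma to make each $c_k$ a critical value of $\psi$ and the genus to force infinitely many orbits, a contradiction.

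The main obstacle is precisely the phenomenon motivating the paper: loss of uniqueness of the Nehari-slice maximizer makes $\psi$ only locally Lipschitz, so one must (i) establish its Lipschitz character, the formula for $\partial\psi$, its compatibility with the $\mathbb Z_2$- and $\zn$-symmetries, and a non-smooth deformation lemma on the Hilbert manifold $S^+$; and (ii), the genuinely hard point, show that non-smooth criticality of $\psi$ still delivers an honest weak solution, i.e. that $0\in\partial\psi(w)$ forces $\Phi'(u)=0$ for some $u\in M(w)$. The remaining ingredients --- the variational framework, the geometry from $(F_1)$--$(F_4)$, and the concentration-compactness treatment of the loss of compactness on $\rn$ --- are as in \cite{sw1, sw2}, up to the systematic passage to Clarke's subdifferential.
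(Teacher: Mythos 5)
Your overall strategy coincides with the paper's: reduce to the locally Lipschitz functional $\wh\Psi$ on $S^+$, work with Clarke's subdifferential, get the ground state via Ekeland plus Lions' lemma and $\zn$-translations, and get multiplicity via genus and a non-smooth deformation argument. However, there is a genuine gap precisely at the step you yourself call ``the genuinely hard point (ii)'', and your proposal offers no mechanism to close it. Your max-rule only yields the inclusion $\partial\psi(w)\subseteq\overline{\mathrm{conv}}\,\{\,t\,P_W\Phi'(tw+h(t,w)):t\in[t_1(w),t_2(w)]\,\}$, so $0\in\partial\psi(w)$ only says that $0$ lies in a closed convex hull of derivatives taken at \emph{different} maximizers; a convex combination of nonzero gradients can vanish, so this does not produce any $u\in M(w)$ with $\Phi'(u)=0$. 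The non-smooth deformation lemma you invoke cannot repair this: deformation arguments detect exactly Clarke-criticality of $\psi$, which here is the hypothesis, not the desired conclusion about $\Phi$. The same missing link is needed again in the multiplicity part, where Palais--Smale sequences of $\psi$ must be converted into Palais--Smale sequences of $\Phi$ (for the discreteness property modulo $\zn$ and for identifying critical orbits).

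What closes this gap in the paper is the structural analysis of the fiber maximizers that your sketch asserts but does not prove: the pointwise inequality \eqref{eq1} together with its equality case (Proposition \ref{prop}(ii), Corollary \ref{cor}) shows that $\wh E(u)\cap\cm$ is a segment $[s_w,t_w]w$ on a \emph{single ray}, that $f(x,sw)=sf(x,w)$ a.e.\ for $s\in[s_w,t_w]$, and hence $\Phi(sw)=\Phi(w)$ and $\Phi'(sw)=s\Phi'(w)$ there (Proposition \ref{mainprop}). Consequently all derivatives at maximizers are positive multiples of one fixed functional, which gives the one-sided bound $\wh\Psi^\circ(u;v)\le\tau(v)\,\Phi'(s^*u+z^*)v$ with $\tau$ bounded and bounded away from $0$ (Proposition \ref{crit}); only then does $0\in\partial\wh\Psi(u)$ force $m(u)$ to consist of critical points of $\Phi$, and only then do PS-sequences transfer. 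Your observation that the inner maximizer over $E^-$ is unique because $F(x,\cdot)$ is convex (which indeed follows from $(F_2)$, $(F_4)$) is correct and neat, but it is no substitute for this segment/proportionality structure. Two smaller points: the paper's notion of geometrically distinct solutions also identifies points of the same segment $[\sigma_w,\tau_w]w$, not only $\zn$-translates, and this is what makes counting critical orbits on $S^+$ equivalent to the statement of the theorem; and your claim that $c_k\to\infty$ in the periodic setting is unjustified and not needed --- the paper instead uses $c_k:=\inf\{d:\gamma(\Psi^d)\ge k\}$, the $\mu$-regularized pseudo-gradient field of Proposition \ref{pseudogr}, and the discreteness of PS-sequences to show $K_{c_k}\ne\emptyset$ and $c_k<c_{k+1}$, which already yields the contradiction.
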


\begin{theorem} \label{thm2} 
(i) Suppose $f$ satisfies $(F_1)$--$(F_4)$ and $\lambda\ne\lambda_k$ for any $k$, where $\lambda_k$ is the $k$-th eigenvalue of $-\Delta$ in $H^1_0(\Omega)$. Then equation \eqref{se1} has a ground state solution. If moreover $f$ is odd in $u$, then equation \eqref{se} has infinitely many pairs of geometrically distinct solutions $\pm u_k$ such that the $L^\infty(\Omega)$-norm of $u_k$ tends to infinity with $k$. \\
(ii) If $\lambda=\lambda_k$ for some $k$, then the above results remain valid under the additional assumption that $f(x,u)\ne 0$ unless $u=0$. 
\end{theorem}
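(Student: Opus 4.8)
The plan is to attack \eqref{se1} through the generalized (Nehari--Pankov) manifold, exploiting that on a bounded domain the embedding $H^1_0(\Omega)\hookrightarrow L^p(\Omega)$, $p\in(2,2^*)$, is compact and that the negative eigenspace of $-\Delta-\lambda$ is finite-dimensional, which keeps the analysis much closer to the classical Nehari setting than in the periodic problem on $\rn$. Write $E=H^1_0(\Omega)$ and
\[
\Phi(u)=\tfrac12\io\bigl(|\nabla u|^2-\lambda u^2\bigr)\,dx-\io F(x,u)\,dx ,
\]
which is of class $C^1$ on $E$ by $(F_1)$. Let $E^-$, $E^0$, $E^+$ be the closed subspaces spanned by the eigenfunctions of $-\Delta$ in $H^1_0(\Omega)$ with eigenvalue $<\lambda$, $=\lambda$, $>\lambda$ respectively; in case (i) $E^0=\{0\}$, while in case (ii) we replace $E^-$ by $E^-\oplus E^0$, which is still finite-dimensional. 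Choosing an equivalent inner product adapted to $-\Delta-\lambda$ we may write $\Phi(u)=\tfrac12\bigl(\|u^+\|^2-\|u^-\|^2\bigr)-\io F(x,u)\,dx$ for $u=u^-+u^+\in E^-\oplus E^+$ (in case (ii) the quadratic form simply vanishes on the adjoined $E^0$-part). Note that $(F_2)$ and $(F_4)$ force $f(x,\cdot)$ to be nondecreasing, hence $F(x,\cdot)$ convex with $F\ge0$; under the extra hypothesis in (ii) one has in addition $F(x,u)>0$ for $u\ne0$, so that $\Phi$ is still non-positive on the enlarged $E^-$ and strictly negative on $E^0\setminus\{0\}$, exactly what is needed below.

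First I would record the linking geometry: $(F_2)$ gives $\Phi(u)\ge\delta>0$ for $u\in E^+$ with $\|u\|=\rho$ small, while $\Phi\le0$ on $E^-$; and $(F_3)$ together with the convexity and positivity of $F$ gives, for each $w\in E^+\setminus\{0\}$, that $\Phi$ restricted to the half-space $\wh E(w):=\r^+w\oplus E^-$ attains a strictly positive maximum and that every maximizer lies on the generalized Nehari manifold
\[
\cm=\bigl\{u\in E\setminus E^-:\ \la\Phi'(u),u\ra=0\ \text{and}\ \Phi'(u)|_{E^-}=0\bigr\}.
\]
The classical monotonicity lemma --- for $u\in\cm$ one has $\Phi(tu+v)\le\Phi(u)$ for all $t\ge0$ and $v\in E^-$ --- now holds only with ``$\le$'', not with strict inequality, because $(F_4)$ is not strict; consequently the set $\wh m(w)\subset\cm$ of maximizers of $\Phi|_{\wh E(w)}$ need not be a singleton, and the reduced functional
\[
J\colon S^+\to\r,\qquad J(w):=\max_{\wh E(w)}\Phi ,
\]
defined on the unit sphere $S^+$ of $E^+$, is continuous but in general not of class $C^1$. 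The core of the argument --- and, I expect, the main obstacle --- is to prove that $J$ is nevertheless locally Lipschitz (here one uses $(F_1)$--$(F_4)$, the finite dimension of $E^-$, and the compactness of $\Omega$), to identify its Clarke subdifferential $\partial J$ by a chain-rule/Danskin-type computation in terms of the derivatives $\Phi'(u)$, $u\in\wh m(w)$, and to show that $0\in\partial J(w)$ forces $\Phi'(u)=0$ for every $u\in\wh m(w)$; this is precisely the point at which smooth critical point theory must be replaced by Clarke's non-smooth calculus and by the theory of locally Lipschitz functionals.

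Granting this, a simple computation (using the monotonicity lemma with ``$\le$'') shows $\inf_{S^+}J=\inf_{\cm}\Phi=:c>0$, and the ground state is obtained by minimization: a minimizing sequence $(w_n)\subset S^+$ produces, via any choice $u_n\in\wh m(w_n)\subset\cm$, a sequence that is bounded by $(F_3)$ and hence, by the compact embedding, converges (up to a subsequence) to some $u$ with $u\in\cm$ and $\Phi(u)=c$; then $0\in\partial J$ at the corresponding point of $S^+$, so $\Phi'(u)=0$ and $u$ is a critical point of $\Phi$ of least energy among all non-trivial ones, i.e.\ a ground state of \eqref{se1}. When $f$ is odd, $F$ is even, so $\Phi$ and $J$ are even ($J(-w)=J(w)$ because $\wh E(-w)$ and $\wh E(w)$ are carried onto each other by $u\mapsto-u$), and I would run the standard $\z_2$-equivariant minimax on $S^+$: put $c_k:=\inf_{A}\max_{w\in A}J(w)$, the infimum over all compact symmetric $A\subset S^+$ of Krasnoselskii genus $\ge k$. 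Since $\Phi$ satisfies the Palais--Smale condition (again by the compact embedding and $(F_3)$), non-smooth critical point theory gives that each $c_k$ is a critical value of $\Phi$ and that $c_k\to\infty$, whence infinitely many pairs $\pm u_k$ of solutions of \eqref{se1} with $\Phi(u_k)\to\infty$. Finally $\Phi(u_k)\to\infty$ forces $\|u_k\|_{H^1_0}\to\infty$ (as $\Phi$ is bounded above on bounded sets), and then $\|u_k\|_{L^\infty(\Omega)}\to\infty$: otherwise $f(x,u_k)$ would be bounded in $L^\infty(\Omega)$, so by elliptic regularity $u_k$ would be bounded in $W^{2,q}(\Omega)$ for every $q<\infty$, hence in $H^1_0(\Omega)$ --- a contradiction. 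Part (ii) needs no new idea: once $E^0$ has been adjoined to $E^-$ and one invokes $F(x,u)>0$ for $u\ne0$, the half-space geometry and the monotonicity lemma persist verbatim, and the whole scheme applies as in case (i).
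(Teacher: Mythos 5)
Your overall route is the paper's: reduce to the generalized Nehari manifold, note that under $(F_4)$ the maximizer on $\wh E(w)$ need not be unique so the reduced functional is only locally Lipschitz, work with Clarke's subdifferential, get the ground state by minimization and the multiplicity by a genus minimax on $S^+$ using the compact embedding and the PS condition, and finish with elliptic regularity for $\|u_k\|_{L^\infty}\to\infty$. The three facts you flag as ``the core'' (local Lipschitz continuity of the reduced functional, the identification of its subdifferential in terms of $\Phi'$ on the set of maximizers, and the conclusion that $0\in\partial J(w)$ forces every maximizer to be critical) are exactly Propositions \ref{lip} and \ref{crit} of the paper, and the deformation you invoke is built from the odd pseudo-gradient field of Proposition \ref{pseudogr}; you correctly identify these as the place where smooth arguments break down, but you do not supply them, and they are where the real work lies.

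Beyond that, one step as written does not go through. For the ground state you take an arbitrary minimizing sequence $u_n\in\wh m(w_n)$, observe it is bounded, and claim that ``by the compact embedding'' it converges (up to a subsequence) to some $u\in\cm$ with $\Phi(u)=c$, whence $0\in\partial J$ at the corresponding point. Boundedness plus the compact embedding only give weak convergence in $H^1_0(\Omega)$ and strong convergence in $L^p(\Omega)$; the constraint $\Phi'(u_n)u_n=0$ contains the term $\|u_n^+\|^2$, which is not weakly continuous, so the weak limit need not lie in $\cm$, need not realize $c$, and in particular you cannot yet assert that the limit point minimizes $J$ on $S^+$. The paper closes exactly this gap by first applying Ekeland's variational principle to produce a minimizing sequence with $\wh\Psi^\circ(u_n;v)\ge -\|v\|/n$, transferring it via Proposition \ref{crit}(ii) to a bounded PS-sequence for $\Phi$ on $\cm$, and only then using the compact embedding to obtain a strongly convergent subsequence and a nontrivial critical point at level $c$ (an alternative is a weak lower semicontinuity argument exploiting the maximality of $u_n$ on $\wh E(u_n)$, but some such argument must be given). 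A smaller imprecision concerns part (ii): the hypothesis $f(x,u)\ne 0$ for $u\ne 0$ is needed not merely to keep $\Phi$ negative on $E^0\setminus\{0\}$, but to guarantee that no nontrivial critical point lies in $E^0\oplus E^-$, i.e.\ that $\cm$ contains \emph{all} nontrivial solutions of \eqref{se1}; without this, the minimizer of $\Phi$ over $\cm$ need not be a ground state in the sense of the theorem.
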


Similar results, but under the stronger condition $(F_4')$, have been proved in \cite{sw1}.

As usual, a \emph{ground state} is a solution which minimizes the functional corresponding to the problem over the set of  all nontrivial ($u\ne0$) solutions. Later in this section we shall define what we mean by geometrically distinct solutions.  

Existence of a ground state solution under the assumptions of Theorem \ref{thm1} has been shown by S. Liu in \cite{liu}; since this result is an easy consequence of our approach, we include it here anyway. See also \cite{zz} where a number of results on ground states for problems similar to \eqref{se} and \eqref{se1} has been proved and \cite{ta1} where $(F_4)$ has been further weakened. Existence of ground states for \emph{systems} of equations has been discussed in \cite{me}. Concerning existence of infinitely many solutions we know of a result  by Tang \cite{ta2} where a condition different from $(F_4)$ has been introduced for \eqref{se1}, and by Zhong and Zou \cite{zz} where \eqref{se} and \eqref{se1} have been considered under the same hypotheses as in Theorems \ref{thm1} and \ref{thm2}. However, they needed an additional assumption which is not easy to verify unless $u \mapsto f(x,u)/|u|$ is ``most times'' strictly increasing. 

Consider equation \eqref{se} under the assumptions of Theorem \ref{thm1}. Let $E := H^1(\rn)$. The functional corresponding to \eqref{se} is 
\[
\Phi(u) := \frac12 \irn(|\nabla u|^2+V(x)u^2)\,dx - \irn F(x,u)\,dx.
\]
It is well known (see e.g.\ \cite{wi}) that $\Phi\in C^1(E,\r)$ and critical points of $\Phi$ are solutions for \eqref{se}.
Let $E=E^+\oplus E^-$ be the decomposition corresponding to the positive and the negative part of the spectrum of $-\Delta+V$. Since $0\notin \sigma(-\Delta+V)$, there exists an equivalent inner product $\langle.\,,.\rangle$ in $E$ such that
\begin{equation} \label{fcl}
\Phi(u) = \frac12\|u^+\|^2-\frac12\|u^-\|^2 - \irn F(x,u)\,dx,
\end{equation}
where $u^\pm\in E^\pm$. 

For equation \eqref{se1} under the assumptions of Theorem \ref{thm2} we put $E=H^1_0(\Omega)$ and we have the spectral decomposition $E = E^+\oplus E^0\oplus E^-$, where $E^0$ is the nullspace of $-\Delta-\lambda$ in $E$ and $0 \le \dim (E^0\oplus E^-) < \infty$. Also here we can choose an equivalent inner product such that the corresponding functional $\Phi$ is of the form \eqref{fcl}, with $\rn$ replaced by $\Omega$.

 The following set introduced by Pankov \cite{pa} is called the \emph{generalized Nehari manifold} or the \emph{Nehari-Pankov manifold}:
\begin{equation}
  \label{pan}
\cm := \left\{u\in E\setminus (E^0\oplus E^-):  \Phi'(u)u = 0 \text{ and } \Phi'(u)v = 0 \text{ for all } v\in E^0\oplus E^-\right\}
\end{equation}
($E^0$ is necessarily trivial in Theorem \ref{thm1}).
$(F_4)$ implies $f(x,u)u\ge 0$, and the assumptions of Theorem \ref{thm2} imply that if $\dim E^0>0$, then $f(x,u)u>0$ for $u\ne 0$. Hence $\cm$ contains all nontrivial critical points of $\Phi$. Note that if $E^0\oplus E^-=\{0\}$, then $\cm$ is the usual Nehari manifold \cite{sw2}. Since this case is considerably easier to handle, we assume in what follows that $\sigma(-\Delta+V)\cap(-\infty,0)\ne \emptyset$ in Theorem \ref{thm1} and $\lambda\ge \lambda_1$ in Theorem \ref{thm2}. As in \cite{sw1}, for $u\notin E^0\oplus E^-$ we define
\begin{align}
  \label{eu}
E(u) & :=  E^0\oplus E^-\oplus \r u = E^0\oplus E^-\oplus \r u^+ \\
& \quad \text{and} \quad  \wh E(u) := E^0\oplus E^-\oplus \r^+ u = E^0\oplus E^-\oplus \r^+ u^+, \nonumber
\end{align}
where $\r^+=[0,\infty)$. It has been shown there that if $(F_4)$ is replaced by $(F_4')$, then $\wh E(u)$ intersects $\cm$ at a unique point which is the unique global maximum of $\Phi|_{\wh E(u)}$. It has been shown in \cite{zz} by an explicit example that if $(F_4)$ but not $(F_4')$ holds, then (in the framework of Theorem \ref{thm2}) $\wh E(u)$ and $\cm$ may intersect on a finite line segment. In the next section we shall show that $\wh E(u)\cap\cm \ne\emptyset$ and if $w\in \wh E(u)\cap\cm$, then there exist $\sigma_w > 0$, $\tau_w\ge \sigma_w$ such that $\wh E(u)\cap\cm = [\sigma_w,\tau_w]w$. In other words, $\wh E(u)\cap \cm$ is either a point or a finite line segment. We also show that a point $\wt w\in[\sigma_w,\tau_w]w$ is critical for $\Phi$ if and only if the whole segment $[\sigma_w,\tau_w]w$ consists of critical points.

In Theorem \ref{thm1} the functional $\Phi$ is invariant with respect to the action of $\zn$ given by the translations $k\mapsto u(\cdot-k)$, $k\in\zn$. Hence if $u$ is a solution of \eqref{se}, then so is $u(\cdot-k)$. This and the preceding paragraph justify the following definition: Two solutions $u_1$ and $u_2$ are called \emph{geometrically distinct} if $u_2\ne u_1(\cdot-k)$ for any $k\in\zn$ and $u_2\notin [\sigma_{u_1},\tau_{u_1}]u_1$. In Theorem \ref{thm2} there is no $\zn$-invariance but we still want to identify solutions in  $\wh E(u)\cap\cm$. So $u_1,u_2$ are \emph{geometrically distinct} if $u_2\notin[\sigma_{u_1},\tau_{u_1}]u_1$.  

\medskip

\section{Preliminaries} \label{prel}

In this section we assume that the hypotheses of Theorem \ref{thm1} or \ref{thm2} are satisfied. In particular, $(F_1)$--$(F_4)$ hold. To simplify notation, $\Omega$ will stand for $\rn$ or for a bounded domain in $\rn$.    

\begin{lemma} \label{ineq}
If $f(x,u)\ne 0$, then $F(x,u)<\frac12f(x,u)u$.
\end{lemma}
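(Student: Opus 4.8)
The plan is to exploit the monotonicity hypothesis $(F_4)$ to compare $F(x,u)$ with the ``trapezoidal'' estimate $\frac12 f(x,u)u$. Fix $x$ and, without loss of generality, a point $u>0$ with $f(x,u)\ne 0$ (the case $u<0$ is symmetric, and $u=0$ is excluded since $f(x,0)=0$ by $(F_2)$). The idea is to write $F(x,u)=\int_0^u f(x,s)\,ds$ and, inside the integral, replace $f(x,s)$ by $\frac{f(x,s)}{s}\cdot s$, so that
\[
F(x,u) = \int_0^u \frac{f(x,s)}{s}\,s\,ds.
\]
Since $s\mapsto f(x,s)/s$ is non-decreasing on $(0,\infty)$ by $(F_4)$, for every $s\in(0,u)$ we have $f(x,s)/s \le f(x,u)/u$, whence
\[
F(x,u) \le \frac{f(x,u)}{u}\int_0^u s\,ds = \frac{f(x,u)}{u}\cdot\frac{u^2}{2} = \frac12 f(x,u)u .
\]
This already gives the non-strict inequality; the main point is to upgrade it to a strict one.

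For strictness I would argue that the inequality $f(x,s)/s \le f(x,u)/u$ cannot be an equality for almost every $s\in(0,u)$. Suppose it were; then, since $g(s):=f(x,s)/s$ is non-decreasing, $g$ would be constant equal to $c:=f(x,u)/u$ on $(0,u)$, so $f(x,s)=cs$ for all $s\in(0,u]$. By $(F_2)$, $f(x,s)=o(s)$ as $s\to0$, which forces $c=0$, hence $f(x,u)=cu=0$, contradicting our assumption $f(x,u)\ne0$. Therefore the continuous function $s\mapsto f(x,u)/u - f(x,s)/s$ is non-negative on $(0,u)$ and strictly positive at some point, hence on a subinterval; integrating $\big(f(x,u)/u - f(x,s)/s\big)s$ over $(0,u)$ then yields a strictly positive number, i.e. $F(x,u) < \frac12 f(x,u)u$.

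The only mildly delicate point is handling the endpoint behaviour near $s=0$: the ratio $f(x,s)/s$ need not be defined at $s=0$, but by $(F_2)$ it extends continuously there with value $0$, so the monotonicity statement in $(F_4)$ and the integral manipulations are legitimate on the closed interval after this extension; alternatively one integrates over $(\delta,u)$ and lets $\delta\to0^+$, using $(F_2)$ to control the vanishing contribution near $0$. I do not expect a genuine obstacle here — the argument is a one-line convexity-type estimate made strict by $(F_2)$ — so the ``hard part'' is merely bookkeeping the sign at the origin.
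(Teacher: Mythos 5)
Your proof is correct and follows essentially the same route as the paper: write $F(x,u)=\int_0^u \frac{f(x,s)}{s}\,s\,ds$, compare $f(x,s)/s$ with $f(x,u)/u$ via $(F_4)$, and use $(F_2)$ (the ratio tends to $0$ while $f(x,u)/u>0$) to make the inequality strict. Your strictness step is phrased as a short contradiction argument rather than the paper's direct observation that $f(x,s)/s<f(x,u)/u$ near $s=0$, but this is only a cosmetic difference.
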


\begin{proof}
Suppose $u>0$. Since $f(x,t)/t\to 0$ as $t\to 0$ and $f(x,u)/u>0$,
\[
F(x,u) = \int_0^u\frac{f(x,t)}t\,t\,dt < \frac{f(x,u)}u\int_0^ut\,dt = \frac12f(x,u)u.
\]
For $u<0$ the proof is similar.
\end{proof}

The following result will be crucial for studying the structure of the set $\wh E(u)\cap\cm$.  

\begin{proposition} \label{prop}
Let $x\in\Omega$ be fixed and let $u,s,v\in\r$ be such that $s\ge 0$ and $f(x,u)\ne 0$. Then: \\
(i)
\begin{equation} \label{eq1}
g(s,v) := f(x,u)\left[\textstyle{\frac12}\left(s^2-1\right)u+sv\right] + F(x,u)-F(x,su+v) \le 0
\end{equation}
for all $x$. \\
(ii) There exist $s_u\in(0,1]$, $t_u\ge1$ such that $g(s,v)=0$ if and only if $s\in[s_u,t_u]$ and $v=0$ ($s_u=t_u$ not excluded). Moreover, for such $s$ we have $f(x,su) = sf(x,u)$.
\end{proposition}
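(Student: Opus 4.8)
\emph{Proof plan.} First I would reduce to the case $u>0$: for $u<0$ one applies the result with $f(x,\cdot)$, $u$, $v$ replaced by $-f(x,-\cdot)$, $-u$, $-v$ (this substitution preserves $(F_1)$--$(F_4)$, sends $F(x,\cdot)$ to $F(x,-\cdot)$, and turns $g$ into the analogous expression), while $u=0$ is impossible since $f(x,0)=0$ by $(F_2)$. So fix $x$, abbreviate $f(t):=f(x,t)$, $F(t):=F(x,t)$, and record the elementary consequences of $(F_2)$ and $(F_4)$: $f\ge0$ on $[0,\infty)$ and $f\le0$ on $(-\infty,0]$ (hence $F\ge0$ everywhere), and, since $f(u)\ne0$, the number $b:=f(u)$ is positive while $\beta(t):=f(t)/t$ is positive, continuous and nondecreasing on $(0,\infty)$.

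The decisive (and, I expect, only genuinely inventive) step is a substitution that makes $g$ concave. Writing $y:=su+v$ for the argument of the second $F$ and expanding, one gets
\[
g(s,v)=-\tfrac{bu}{2}\,s^2+b\,sy-\tfrac{bu}{2}+F(u)-F(y),
\]
which for each fixed $y$ is a strictly concave quadratic in $s$. On $\{s\ge0\}$ its maximum is attained at $s=y/u$ when $y>0$ — and then $v=y-su=0$ — and at $s=0$ when $y\le0$. Hence it suffices to estimate $g$ at these two kinds of points.

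For part (i): if $y\le0$, then $F(y)\ge0$ and $g(0,y)=F(u)-\tfrac{bu}{2}-F(y)\le F(u)-\tfrac12f(u)u<0$ by Lemma \ref{ineq}, so $g(s,v)<0$. If $y>0$, put $s=y/u>0$ so that $y=su$, and combine $F(su)-F(u)=\int_u^{su}\beta(r)\,r\,dr$ with $\tfrac{bu}{2}(s^2-1)=\beta(u)\int_u^{su}r\,dr$ to obtain
\[
g(s,0)=\int_u^{su}\bigl(\beta(u)-\beta(r)\bigr)r\,dr .
\]
Since $\beta$ is nondecreasing, $(\beta(u)-\beta(r))r\le0$ for $r\ge u$ and $\ge0$ for $0<r\le u$; in either case ($su\ge u$ or $su\le u$) the oriented integral from $u$ to $su$ is $\le0$. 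Combining the two cases yields $g(s,v)\le0$ for all $s\ge0$, $v\in\r$.

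For part (ii): strict concavity of the quadratic in $s$ forces any zero of $g$ to lie in the branch $y>0$ and, within it, at the maximum, i.e.\ $v=0$ and $s=y/u$, with moreover $g(s,0)=0$. By the displayed formula this means the continuous, sign-definite integrand vanishes identically, i.e.\ $\beta\equiv\beta(u)$ on $I_s:=[\min\{u,su\},\max\{u,su\}]$; in particular $f(su)=su\,\beta(su)=su\,\beta(u)=sf(u)$. Conversely, $v=0$ and $\beta\equiv\beta(u)$ on $I_s$ give $g(s,0)=0$ by the same formula. It then remains to show that $S:=\{s>0:\beta\equiv\beta(u)\text{ on }I_s\}$ is a closed interval $[s_u,t_u]$ with $0<s_u\le1\le t_u$: indeed $1\in S$; $S$ is an interval because the $I_s$ are nested as $s$ tends to $1$; $S$ is closed by continuity of $\beta$; $\inf S>0$, for otherwise $\beta\equiv\beta(u)$ on all of $(0,u]$, making $f(t)=\beta(u)t$ near $0$ with $\beta(u)>0$, contradicting $(F_2)$; and $\sup S<\infty$, for otherwise $\beta\equiv\beta(u)$ on $[u,\infty)$, so $F(t)/t^2\to\tfrac12\beta(u)<\infty$, contradicting $(F_3)$. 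Setting $s_u:=\inf S$, $t_u:=\sup S$ finishes the argument. Thus the real obstacle is spotting the substitution $y=su+v$: once the maximum over $s$ is seen to sit on $\{v=0\}$, part (i) collapses to the one monotonicity inequality above (immediate from $(F_4)$), and part (ii) is just the bookkeeping that the equality set $S$ is a closed interval bounded away from $0$ and $\infty$ via $(F_2)$ and $(F_3)$, with the marginal case $y\le0$ handled by Lemma \ref{ineq}.
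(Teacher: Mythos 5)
Your argument is correct, and it reaches the conclusion by a route that differs in execution from the paper's. The paper first proves that $g(s,v)\to-\infty$ as $s+|v|\to\infty$ (a negative-definite quadratic form estimate based on Lemma \ref{ineq} and $(F_3)$), so that $g$ attains a maximum $\ge g(1,0)=0$ on $\{s\ge 0\}$; it then rules out $s=0$ and exploits the first-order conditions $g'_v=g'_s=0$ at any maximum point to get $v=0$ and $f(x,su)/su=f(x,u)/u$, after which $(F_4)$ produces the interval $[s_u,t_u]$. You instead substitute $y=su+v$, note that for fixed $y$ the function is a strictly concave quadratic in $s$ (so its constrained maximizer is explicit: $v=0$ when $y>0$, $s=0$ when $y\le 0$), and reduce everything to the one-variable identity $g(s,0)=\int_u^{su}\bigl(\beta(u)-\beta(r)\bigr)r\,dr$ with $\beta(t)=f(x,t)/t$, which is $\le 0$ by $(F_4)$ and vanishes exactly when $\beta\equiv\beta(u)$ between $u$ and $su$; you then verify directly that this equality set is a closed interval containing $1$, bounded away from $0$ by $(F_2)$ and bounded above by $(F_3)$. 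What each approach buys: yours is fully explicit, dispenses with the coercivity estimate and the critical-point equations, and makes $s_u>0$ and $t_u<\infty$ immediate consequences of $(F_2)$ and $(F_3)$ (finiteness of $t_u$ is only implicit in the paper, via $g\to-\infty$); on the other hand, your integral identity is precisely the integrated form of the paper's computation $g'_s(s,0)=su^2\bigl(\beta(u)-\beta(su)\bigr)$, so the two arguments coincide at the decisive monotonicity step, and the paper's softer maximization argument costs little since the underlying estimate is needed elsewhere anyway (it is the same one behind Proposition \ref{mainprop}(i)). One cosmetic slip: your assertion that $\beta$ is positive on all of $(0,\infty)$ is not justified under $(F_2)$--$(F_4)$ (only $\beta\ge 0$, nondecreasing, and $\beta(u)>0$ hold), but you never use more than that, so nothing is affected.
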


Part (i) of this proposition has been shown in \cite{liu} and it extends a similar result in \cite{sw1} where $(F_4')$ has been assumed (however, our $s$ corresponds to $s+1$ in \cite{liu, sw1}). Here we provide a different argument which will be needed in order to show part (ii). 

\begin{proof}
Obviously, $g(1,0)=0$. We shall show that $g(s,v)\to-\infty$ as $s+|v|\to \infty$. Put $z=z(s):=su+v$. Using Lemma \ref{ineq}, we obtain
\begin{align*}
g(s,v) & = f(x,u)\left[\textstyle{\frac12}\left(s^2-1\right)u+sv\right]+F(x,u)-F(x,z) \\
&  < f(x,u)\left[\textstyle{\frac12}\left(s^2-1\right)u+s(z-su)\right]+\frac12f(x,u)u-F(x,z) \\
& = -\textstyle{\frac12}s^2f(x,u)u + sf(x,u)z -Az^2 + (Az^2 - F(x,z)). 
\end{align*}
Since the quadratic form (in $s$ and $z$) above is negative definite if $A>0$ is a constant large enough and since $Az^2 - F(x,z)$ is bounded above according to $(F_3)$, $g(s,v)\to-\infty$ as $s+|v|\to \infty$ as claimed. 

It follows that $g$ has a maximum $\ge 0$ on the set $\{(s,v): s\ge 0\}$. As 
\[
g(0,v) = -\textstyle{\frac12}f(x,u)u+F(x,u)-F(x,v) < -F(x,v) \le 0
\]
(by Lemma \ref{ineq}), the maximum is attained at some $(s,v)$ with $s>0$. Then 
\begin{equation} \label{e1}
g'_v(s,v) = sf(x,u)-f(x,su+v) = 0
\end{equation}
and
\begin{equation} \label{e2}
g'_s(s,v) =(su+v)f(x,u)-uf(x,su+v) = 0.
\end{equation} 
Using \eqref{e1} in \eqref{e2} we obtain $vf(x,u)=0$. Hence $v=0$ and 
\[
g'_s(s,0) = su^2\left(\frac{f(x,u)}{u} - \frac{f(x,su)}{su}\right) = 0.
\]
By $(F_4)$,  there must exist $s_u, t_u$ such that $s_u\in(0,1]$, $t_u\ge1$ and $g'_s(s,0)=0$ if and only if $s\in[s_u,t_u]$. For such $s$ we have $g(s,0)=g(1,0)=0$ and $f(x,su) = sf(x,u)$. 
\end{proof}

\begin{corollary} \label{cor}
Suppose $u\in\cm$ and let $s\ge 0$, $v\in E^0\oplus E^-$. Then
\[
\io \left(f(x,u)\left[\textstyle{\frac12}\left(s^2-1\right)u+sv\right] + F(x,u)-F(x,su+v)\right) dx \le 0
\]
and there exist $0<s_u\le 1\le t_u$ such that equality holds if and only if $s\in[s_u,t_u]$, $v=0$. Moreover, for such $s$ and almost all $x\in\Omega$, $f(x,su) = sf(x,u)$.
\end{corollary}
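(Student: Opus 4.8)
The plan is to read Corollary \ref{cor} as the $\Omega$-integrated version of Proposition \ref{prop}, the only delicate points being the set on which $f(x,u(x))=0$ (where Proposition \ref{prop} says nothing) and the passage from the $x$-dependent intervals of Proposition \ref{prop}(ii) to a single interval $[s_u,t_u]$. Write $g_x(s,v):=f(x,u(x))\big[\tfrac12(s^2-1)u(x)+sv(x)\big]+F(x,u(x))-F(x,su(x)+v(x))$ for the integrand. First I would record two pointwise facts from $(F_2)$ and $(F_4)$: that $F(x,z)\ge 0$ for every $z$ (since $(F_4)$ gives $f(x,z)z\ge 0$, so $f(x,\cdot)$ has the sign of its argument), and that $f(x,u(x))=0$ with $u(x)\ne 0$ forces $F(x,u(x))=0$ (on the half-line through $u(x)$ the map $t\mapsto f(x,t)/t$ is $\ge 0$, nondecreasing, and vanishes at $u(x)$, hence $f(x,\cdot)\equiv 0$ between $0$ and $u(x)$). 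Consequently $g_x(s,v)\le 0$ for a.e.\ $x$: on $\{f(x,u(x))\ne 0\}$ this is exactly Proposition \ref{prop}(i), while on $\{f(x,u(x))=0\}$ one has $g_x(s,v)=-F(x,su(x)+v(x))\le 0$ (the term $F(x,u(x))$ being $0$ whether $u(x)=0$ or not). Integrating over $\Omega$ yields the asserted inequality.

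For the interval, set $h(s):=\io g_x(s,0)\,dx$; by $(F_1)$ and the embedding $E\hookrightarrow L^2\cap L^p$ this is finite and continuous in $s$, $h(1)=0$, and $h\le 0$ by the previous step. For a.e.\ $x$ the $C^1$ map $s\mapsto g_x(s,0)$ has derivative $u(x)\big(sf(x,u(x))-f(x,su(x))\big)$, which by $(F_4)$ (compare $f(x,u(x))/|u(x)|$ with $f(x,su(x))/|su(x)|$) is $\ge 0$ on $[0,1]$ and $\le 0$ on $[1,\infty)$; hence $h$ is nondecreasing on $[0,1]$ and nonincreasing on $[1,\infty)$, so $\{s\ge 0:h(s)=0\}$ is a closed interval containing $1$. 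It is bounded, since by the divergence $g_x(s,0)\to-\infty$ established in the proof of Proposition \ref{prop} (valid for a.e.\ $x$ in $A:=\{x:f(x,u(x))\ne 0\}$) and Fatou's lemma, $h(s)\to-\infty$ as $s\to\infty$; and its left endpoint is $>0$, because $h(0)=\io\big(F(x,u)-\tfrac12f(x,u)u\big)\,dx=\int_{A}\big(F(x,u)-\tfrac12f(x,u)u\big)\,dx<0$ by Lemma \ref{ineq}. Here $A$ has positive measure: otherwise $f(x,u(x))=0$ a.e., so $\Phi'(u)u=\|u^+\|^2-\|u^-\|^2=0$ and $0=\Phi'(u)u^-=-\|u^-\|^2$, forcing $u^-=u^+=0$, i.e.\ $u\in E^0\oplus E^-$, contrary to $u\in\cm$. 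Call this interval $[s_u,t_u]$, so $0<s_u\le 1\le t_u$. For $s\in[s_u,t_u]$, $h(s)=0$ together with $g_x(s,0)\le 0$ a.e.\ gives $g_x(s,0)=0$ a.e., whence $f(x,su)=sf(x,u)$ a.e.: on $A$ by Proposition \ref{prop}(ii), while off $A$ both sides vanish (using $F(x,su)=0$ where $u\ne 0$ and $F(x,u)=0$). This is the last assertion of the corollary.

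Finally, suppose the integral in the statement equals $0$ for some $s\ge 0$, $v\in E^0\oplus E^-$. Since $g_x(s,v)\le 0$ a.e., it vanishes a.e.; Proposition \ref{prop}(ii) applied on the positive-measure set $A$ then gives $v(x)=0$ a.e.\ on $A$ and $s\in[s^{x}_{u(x)},t^{x}_{u(x)}]$ (so $s>0$). Because $v$ lies in the finite-dimensional eigenspace $E^0\oplus E^-$ of $-\Delta$ in the setting of Theorem \ref{thm2} (resp.\ the spectral subspace of $-\Delta+V$ in Theorem \ref{thm1}), its vanishing on the positive-measure set $A$ forces $v\equiv 0$ by unique continuation. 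With $v=0$, vanishing of the integrand reads $g_x(s,0)=0$ a.e., i.e.\ $h(s)=0$, so $s\in[s_u,t_u]$; the converse ($v=0$, $s\in[s_u,t_u]\Rightarrow$ integral $=h(s)=0$) is immediate. The one genuinely delicate step is this propagation of $v\equiv 0$: Proposition \ref{prop} gives it only on $\{f(x,u(x))\ne 0\}$, and one needs $u\in\cm$ (which is precisely what makes this set have positive measure) together with unique continuation to spread it to all of $\Omega$; the rest is bookkeeping around the monotonicity $(F_4)$ and the sign of $F$.
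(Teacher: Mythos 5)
Up to its last step your argument is essentially the paper's, with some welcome extra care. The inequality is obtained exactly as in the paper: Proposition \ref{prop}(i) pointwise on $A:=\{x\in\Omega: f(x,u(x))\ne 0\}$, and on $\Omega\setminus A$ the observation that $(F_4)$ forces $F(x,u(x))=0$, so the integrand reduces to $-F(x,su+v)\le 0$. Your construction of $[s_u,t_u]$ as the zero set of $h(s)=\io g_x(s,0)\,dx$, using the monotonicity of $s\mapsto g_x(s,0)$ on $[0,1]$ and on $[1,\infty)$, differs slightly from the paper, which simply takes $s_u$ and $t_u$ to be the essential supremum of $s_{u(x)}$ and the essential infimum of $t_{u(x)}$ over $A$; your version is fine (arguably more robust, since it is tied directly to the equality condition), and your verification that $A$ has positive measure via $\Phi'(u)u=\Phi'(u)u^-=0$ fills in a point the paper only asserts. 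One minor, recoverable omission: off $A$, to pass from $F(x,su(x))=0$ to $f(x,su(x))=0$ you should add that $f(x,\cdot)$ has the sign of its argument, so vanishing of the integral forces $f(x,t)=0$ for all $t$ between $0$ and $su(x)$, and then use continuity of $f$.

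The genuine gap is the unique-continuation step in the ``only if'' direction. Vanishing of the integrand gives $v(x)=0$ only for a.e.\ $x\in A$, and you propagate this to $v\equiv 0$ by appealing to unique continuation for elements of $E^0\oplus E^-$. In the setting of Theorem \ref{thm2} this is legitimate: $E^0\oplus E^-$ is spanned by finitely many Dirichlet eigenfunctions of $-\Delta$, which are real-analytic inside the connected set $\Omega$, so a nontrivial combination cannot vanish on a set of positive measure. But in the setting of Theorem \ref{thm1}, $E^-$ is the infinite-dimensional negative spectral subspace of the periodic operator $-\Delta+V$ on $\rn$; its elements are not eigenfunctions and satisfy no elliptic equation (they lie merely in the range of a spectral projection, and $V$ is only continuous, so no analyticity argument is available either), and there is no unique continuation principle for such functions. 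Thus the assertion that vanishing on a set of positive measure forces $v\equiv 0$ is unsupported precisely in the case where $v^-$ may be nonzero. Note that the paper's own proof does not attempt such a propagation at all: it applies Proposition \ref{prop}(ii) pointwise on $A$ and reads the characterization of equality off the resulting $x$-dependent intervals. As written, your ``only if'' direction is therefore justified for Theorem \ref{thm2} but not for Theorem \ref{thm1}; you would need either to drop the unique-continuation argument and phrase that direction at the pointwise level, or to supply a genuinely different reason controlling $v$ on $\Omega\setminus A$ (for instance, under the extra hypothesis of Theorem \ref{thm2}(ii), $F(x,su+v)=0$ there does force $su+v=0$, but nothing of this kind is available in general).
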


\begin{proof}
If $u\in\cm$, then $f(x,u(x))\ne 0$ for $x$ on a set of positive measure. According to Proposition \ref{prop}, inequality \eqref{eq1} holds for such $x$ and there exist $s_{u(x)}\in(0,1]$, $t_{u(x)}\ge 1$ such that the left-hand side of \eqref{eq1} is zero if and only if $s\in[s_{u(x)},t_{u(x)}]$ and $v(x)=0$. Moreover, for such $s$, $f(x,su(x)) = sf(x,u(x))$. Now one takes $s_u := \text{ess}\sup\{s_{u(x)}: f(x,u(x))\ne 0\}$ and $t_u := \text{ess}\inf\{t_{u(x)}: f(x,u(x))\ne 0\}$. 

Note that if $f(x,u(x))=0$, then $F(x,u(x)) = \int_0^{u(x)}f(x,t)\,dt = 0$ because $f(x,t)=0$ for $t$ between 0 and $u(x)$ according to $(F_4)$. Hence the integrand above is $\le 0$ also in this case.  
\end{proof}

\begin{proposition} \label{mainprop}
(i) If $u\in E\setminus (E^0\oplus E^-)$, then $\wh E(u)\cap\cm\ne\emptyset$. \\ 
(ii) If $w\in \wh E(u)\cap\cm$, then there exist $0<s_w\le 1 \le t_w$ such that $\wh E(u)\cap\cm = [s_w,t_w]w$. Moreover, $\Phi(sw)=\Phi(w)$, $\Phi'(sw) = s\Phi'(w)$ for all $s\in [s_w,t_w]$ and $\Phi(z)<\Phi(w)$ for all other $z\in \wh E(u)$. \\
(iii) $\cm$ is bounded away from $E^0\oplus E^-$, closed and $c := \inf_{w\in\cm}\Phi(w) > 0$. Moreover, $\Phi|_\cm$ is coercive, i.e., $\Phi(u)\to\infty$ as $u\in\cm$ and $\|u\|\to\infty$.
\end{proposition}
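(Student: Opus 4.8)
The plan is to prove the three assertions of Proposition~\ref{mainprop} essentially in the order stated, leaning heavily on Corollary~\ref{cor} and the structure of the functional \eqref{fcl}.

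For part (i), I would fix $u\in E\setminus(E^0\oplus E^-)$ and study $\Phi$ on the finite-dimensional half-space $\wh E(u) = E^0\oplus E^-\oplus\r^+u^+$. Since $\Phi(z) = \tfrac12\|z^+\|^2 - \tfrac12\|z^-\|^2 - \io F(x,z)\,dx$, and on $\wh E(u)$ the $E^+$-component is $su^+$ with $s\ge0$, the first step is to show $\Phi$ is nonpositive on the boundary $\{s=0\}$ (immediate from $F\ge 0$, which follows from $(F_4)$) and that $\Phi(z)\to-\infty$ as $\|z\|\to\infty$ in $\wh E(u)$; this last fact is the analogue of the $g(s,v)\to-\infty$ estimate in Proposition~\ref{prop}, obtained by the same device of writing $z = su + v$ with $v\in E^0\oplus E^-$, invoking $(F_3)$ to dominate $\io F$ from below by a large multiple of $\io z^2$ minus a constant, and noting that the resulting quadratic form in $(s,\|z^-\|,\dots)$ is negative definite. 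Hence $\Phi|_{\wh E(u)}$ attains a positive maximum at some interior point $w$, and the first-order conditions at $w$ say precisely that $\Phi'(w)u^+ = 0$ and $\Phi'(w)v = 0$ for all $v\in E^0\oplus E^-$, i.e.\ $w\in\cm$.

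For part (ii), given $w\in\wh E(u)\cap\cm$, I would apply Corollary~\ref{cor} with $u$ replaced by $w$: for $s\ge0$ and $v\in E^0\oplus E^-$ one has $\io\big(f(x,w)[\tfrac12(s^2-1)w+sv] + F(x,w)-F(x,sw+v)\big)\,dx\le0$, with equality exactly when $s\in[s_w,t_w]$ and $v=0$. The point is that $\Phi(sw+v) - \Phi(w)$ equals exactly this integral expression: expanding $\tfrac12\|sw^++v^+\|^2 - \tfrac12\|sw^-+v^-\|^2$ and using that $w\in\cm$ forces $\la w^+,w^-\ra$-type cross terms and the pairing of $w$ with $E^0\oplus E^-$ to vanish, the quadratic part collapses to $\tfrac12(s^2-1)\|w^+\|^2 - \cdots$, which by the equations $\Phi'(w)w=0$, $\Phi'(w)v=0$ can be rewritten as $\io f(x,w)[\tfrac12(s^2-1)w + sv]\,dx$. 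Therefore $\Phi(z)\le\Phi(w)$ for all $z\in\wh E(u)$ with equality iff $z = sw$, $s\in[s_w,t_w]$; this gives both $\wh E(u)\cap\cm = [s_w,t_w]w$ (since points of $\cm\cap\wh E(u)$ are maxima of $\Phi|_{\wh E(u)}$ by part~(i)'s argument applied at each such point) and $\Phi(sw)=\Phi(w)$ on that segment. The identity $\Phi'(sw) = s\Phi'(w)$ for $s\in[s_w,t_w]$ follows from the last claim of Corollary~\ref{cor}, namely $f(x,sw(x)) = sf(x,w(x))$ for a.e.\ $x$, which makes every term of $\Phi'(sw)\varphi = \la sw^+,\varphi\ra - \la sw^-,\varphi\ra - \io f(x,sw)\varphi\,dx$ homogeneous of degree one in $s$.

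For part (iii), boundedness away from $E^0\oplus E^-$ follows by a standard argument: if $u\in\cm$ then $\Phi'(u)u = 0$ gives $\|u^+\|^2 = \|u^-\|^2 + \io f(x,u)u\,dx$, and combining $(F_1)$, $(F_2)$ with the Sobolev embedding one bounds $\io f(x,u)u\,dx$ by $\eps\|u\|^2 + C_\eps\|u^+\|^p$ (using that only the $E^+$-part can be small), forcing $\|u^+\|\ge\delta>0$. Positivity of $c$: for $u\in\cm$, Lemma~\ref{ineq} gives $\Phi(u) = \Phi(u) - \tfrac12\Phi'(u)u = \io(\tfrac12 f(x,u)u - F(x,u))\,dx\ge0$, and a quantitative version together with $\|u^+\|\ge\delta$ yields $c>0$; alternatively one uses part~(ii) to compare $\Phi(u)$ with $\Phi$ on a fixed-radius sphere in $E^+$. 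Coercivity of $\Phi|_\cm$ is the one step I expect to be the main obstacle, since under only $(F_4)$ rather than $(F_4')$ one cannot argue via a diffeomorphism of $\cm$ onto the unit sphere of $E^+$ as in \cite{sw1}; instead I would argue by contradiction, taking $u_n\in\cm$ with $\|u_n\|\to\infty$ but $\Phi(u_n)\le M$, normalizing $w_n := u_n/\|u_n\|$, and using part~(ii) (testing $\Phi(su_n^+ + v)\le\Phi(u_n)\le M$ for suitable $s$ and $v\in E^0\oplus E^-$ scaled with $\|u_n\|$) together with $(F_3)$ and a vanishing/non-vanishing dichotomy (Lions' lemma in the periodic case, or compactness of the embedding on the bounded domain) to reach a contradiction, exactly as the corresponding coercivity statement is handled in \cite{sw1, sw2} — the remark in the excerpt that no hypothesis stronger than $(F_1)$--$(F_4)$ was used there is what licenses this. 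Closedness of $\cm$ is then routine from coercivity and the boundedness-away property.
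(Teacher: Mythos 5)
Your overall strategy is the paper's (maximize $\Phi$ on $\wh E(u)$, reduce (ii) to Corollary \ref{cor}, get (iii) by standard estimates plus the coercivity argument of \cite{sw1}), but part (i) has a genuine gap: you call $\wh E(u)=E^0\oplus E^-\oplus\r^+u^+$ a \emph{finite-dimensional} half-space and let the existence of a maximizer follow from continuity together with $\Phi\to-\infty$. That is only true in the bounded-domain setting of Theorem \ref{thm2}. In the setting of Theorem \ref{thm1} the standing assumption $\sigma(-\Delta+V)\cap(-\infty,0)\ne\emptyset$ makes $E^-$ infinite-dimensional, so attainment of the supremum of $\Phi|_{\wh E(u)}$ is not automatic and needs a compactness substitute. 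The paper (following \cite{sw1,sw2}) supplies it via weak sequential upper semicontinuity of $\Phi|_{\wh E(u)}$: along a maximizing sequence the $E^+$-component lies on the ray $\r^+u^+$, the term $-\frac12\|z^-\|^2$ is weakly upper semicontinuous, and $-\io F(x,z)\,dx$ is handled by Fatou's lemma since $F\ge 0$. Without this step your proof of (i) does not cover equation \eqref{se}. You should also record why the supremum is positive (e.g. $\Phi(su^+)=\frac12 s^2\|u^+\|^2+o(s^2)$ as $s\downarrow 0$, by $(F_1)$--$(F_2)$), since positivity is what puts the maximizer off the boundary $E^0\oplus E^-$ and legitimizes the first-order conditions you use to conclude $w\in\cm$.

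Two smaller points. In (ii) the identity you assert is not exact: one has $\Phi(sw+v)-\Phi(w)=-\frac12\|v^-\|^2$ \emph{plus} the integral of Corollary \ref{cor} (this is the formula the paper quotes from \cite{sw1,sw2}); the omission is harmless here because the extra term is nonpositive and the corollary already forces $v=0$ in the equality case, but as stated your ``equals exactly'' is false, and the rest of your (ii) (equality case, $\Phi'(sw)=s\Phi'(w)$ from $f(x,sw)=sf(x,w)$) is exactly the paper's argument. In (iii) your first argument for boundedness away from $E^0\oplus E^-$ does not work as written: $\Phi'(u)u=0$ gives $\|u^+\|^2=\|u^-\|^2+\io f(x,u)u\,dx$, and $(F_1)$--$(F_2)$ only bound the integral by $\eps\|u\|_2^2+C_\eps\|u\|_p^p$ with the \emph{full} $u$, so no lower bound on $\|u^+\|$ alone follows; the parenthetical ``only the $E^+$-part can be small'' is precisely what has to be proved. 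The clean route, which you mention as an alternative and which is the paper's, is to prove $c>0$ first (compare $\Phi(w)=\max_{\wh E(w)}\Phi$ with $\Phi$ on a small sphere in $E^+$) and then use $F\ge 0$ to get $\|u^+\|^2\ge 2\Phi(u)\ge 2c$ on $\cm$; closedness and the citation of \cite{sw1} for coercivity are then fine, as the paper itself proceeds this way.
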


Note that an immediate consequence is that if $w$ is a critical point of $\Phi$, then the whole line segment $ [s_w,t_w]w$ consists of critical points. 

\begin{proof}
(i) The conclusion can be found in \cite[Lemma 2.6 and Theorem 3.1]{sw1}, see also \cite[Proposition 39]{sw2}. The proof is by showing that $\Phi(z)\le 0$ for $z\in \wh E(u)$ and $\|z\|$ large enough, and then weak upper semicontinuity of $\Phi|_{\wh E(u)}$ implies that there exists a positive maximum. 

(ii)  For each $z\in \wh E(u)$ we have $z = sw+v$, where $s\ge 0$ and $v=v^0+v^-\in E^0\oplus E^-$. It has been shown in the course of the proof of \cite[Proposition 2.3]{sw1} and \cite[Proposition 39]{sw2} that 
\begin{align*}
& \Phi(z)-\Phi(w) = \Phi(sw+v) -\Phi(w) =  -\frac12\|v^-\|^2  \\
& \qquad + \io \left(f(x,w)\left[\textstyle{\frac12}\left(s^2-1\right)w+sv\right]+F(x,w)-F(x,sw+v)\right) dx
\end{align*}
(again, keep in mind that our $s$ corresponds to $s+1$ in \cite{sw1, sw2}).
Hence according to Corollary \ref{cor}, $\Phi(z)\le\Phi(w)$ for all $z\in \wh E(u)$ and $\Phi(z)=\Phi(w)$ if and only if $z\in  [s_w,t_w]w$. That $\Phi(sw)=\Phi(w)$ for $s\in [s_w, t_w]$ is clear and since $\Phi(sw) = \max_{\wh E(u)}\Phi(z)$, it is also clear that $\wh E(u)\cap\cm = [s_w,t_w]w$ and $\Phi(z)<\Phi(w)$ for other $z$. The equality $\Phi'(sw) = s\Phi'(w)$ follows immediately from the fact that $f(x,sw) = sf(x,w)$.

(iii) That $c>0$ has been shown in \cite[Lemma 2.4]{sw1} and is an immediate consequence of the fact that $\Phi(u) = \frac12\|u\|^2+o(\|u\|^2)$ as $u\to 0$, $u\in E^+$. Since $\Phi|_{E^0\oplus E^-}\le 0$, $\cm$ is bounded away from $E^0\oplus E^-$ and hence closed. Finally, according to Proposition 2.7 and the proof of Theorem 3.1 in \cite{sw1}, $\Phi|_\cm$ is coercive.
\end{proof}

\begin{remark} \label{rem1}
\emph{
If $f$ satisfies $(F_1)$--$(F_4)$ and is of the form $f(x,u) = a(x)h(u)$, where $h(u)\ne 0$ for $u\ne 0$, then $s_w=t_w=1$ in Proposition \ref{mainprop}, i.e.\ $\wh E(u)$ intersects $\cm$ at a unique point. Assuming the contrary, suppose $t_w>1$ and $w>0$ on a set of positive measure (other cases are treated similarly). So $\meas\{x: w(x)>d\}$ is positive for some $d>0$. We claim that $h(t)/t$ is constant for $0<t<d$. Otherwise there exist $s\in(1,t_w]$, $t_0$ and $\eps>0$ such that $\eps<t_0<d-\eps$ and 
\[
\frac{h(t)}t < \frac{h(st)}{st} \quad \text{for all } t\in(t_0-\eps, t_0+\eps).
\]
Since the sets $\{x: w(x)>t_0+\eps\}$ and $\{x:w(x)<t_0-\eps\}$ have positive measure, so does the set $\{x: w(x)\in (t_0-\eps,t_0+\eps)\}$, see \cite{chh}. But this contradicts the last statement of Corollary \ref{cor}. Hence $h(t)/t$ is constant for $0<t<d$ and $h(t)/t\to 0$ as $t\to 0$. So $h(t)=0$ on $(0,d)$ which is impossible. 
}
\end{remark}

According to Proposition \ref{mainprop}, for each $u\in E^+\setminus\{0\}$ there exist $w$ and $0<\sigma_w\le\tau_w$ such that
\[
m(u) := [\sigma_w,\tau_w]w = \wh E(u)\cap\cm\subset E.
\]
This is a multivalued map from $E^+\setminus\{0\}$ to $E$. However, the map $\wh\Psi: E^+\setminus\{0\} \to \r$ given by
\[
\wh\Psi(u) := \Phi(m(u)) = \max_{z\in \wh E(u)}\Phi(z)
\] 
is single-valued because $\Phi$ is constant on $\wh E(u)\cap\cm$. In fact more is true:

\begin{proposition} \label{lip}
The map $\wh\Psi$ is locally Lipschitz continuous. 
\end{proposition}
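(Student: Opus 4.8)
The plan is to exploit the $0$-homogeneity of $\wh\Psi$: since $\wh E(tu)=\wh E(u)$ for $t>0$ we have $\wh\Psi(tu)=\wh\Psi(u)$, and $u\mapsto u/\|u\|$ is smooth on $E^+\setminus\{0\}$, so it suffices to prove that $\wh\Psi$ is locally Lipschitz on the unit sphere $S^+:=\{u\in E^+:\|u\|=1\}$. Fix $u_0\in S^+$.

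The key step is a locally uniform bound on the maximizers: there are a neighbourhood $U\subset S^+$ of $u_0$ and $R>0$ such that $\|w\|\le R$ whenever $w\in\wh E(u)\cap\cm$ for some $u\in U$. First I would show that $\wh\Psi$ is bounded above near $u_0$, i.e.\ that it is impossible to have $u_n\to u_0$ and $z_n\in\wh E(u_n)$ with $\Phi(z_n)\to\infty$. Writing $z_n=s_nu_n+\xi_n^0+\xi_n^-$ with $\xi_n^0\in E^0$, $\xi_n^-\in E^-$, the bound $\Phi(z_n)\le\tfrac12 s_n^2-\tfrac12\|\xi_n^-\|^2-\io F(x,z_n)$ and $F\ge0$ force $s_n\to\infty$ and $\|\xi_n^-\|\le s_n$ for large $n$; normalising $z_n$ by $\sigma_n:=\max\{s_n,\|\xi_n^0\|\}\to\infty$ yields $v_n:=z_n/\sigma_n\rightharpoonup v$ with $v\ne0$ (its $E^+$-component is a nonzero multiple of $u_0$, or else $v$ has a nonzero component in the finite-dimensional $E^0$). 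Weak $H^1$-convergence gives $v_n\to v$ a.e.\ along a subsequence, and then $(F_3)$ together with Fatou yields $\sigma_n^{-2}\io F(x,z_n)=\io F(x,v_n)/v_n^{2}\cdot v_n^{2}\to\infty$ on $\{v\ne0\}$, contradicting $\sigma_n^{-2}\io F(x,z_n)\le\tfrac12$. (When $\dim E^0>0$ one uses here that $f(x,u)u>0$ for $u\ne0$, which holds under the hypotheses of Theorem \ref{thm2}.) Hence $M:=\sup_{u\in U}\wh\Psi(u)<\infty$ for a small enough $U$, and since $\Phi|_\cm$ is coercive by Proposition \ref{mainprop}(iii) while $\Phi(w)=\wh\Psi(u)\le M$ for every $w\in\wh E(u)\cap\cm$, $u\in U$ (Proposition \ref{mainprop}(ii)), such $w$ all lie in a ball of some radius $R$.

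Granting this, the Lipschitz estimate is a short transport argument. Let $u,v\in U$ and pick $w\in\wh E(v)\cap\cm$, so $\wh\Psi(v)=\Phi(w)$; write $w=tv+\eta$ with $t>0$ and $\eta\in E^0\oplus E^-$. Then $w^+=tv$, hence $t=\|w^+\|\le\|w\|\le R$. Put $z:=tu+\eta\in\wh E(u)$; then $w-z=t(v-u)$, so $\|w-z\|\le R\|u-v\|$ and $\|z\|\le R(1+\operatorname{diam}U)=:R'$. Since $\Phi\in C^1(E,\r)$ and $\Phi'$ is bounded on bounded sets (a consequence of $(F_1)$ and $(F_2)$), the mean value theorem gives $L=L(R')$ with $\Phi(w)-\Phi(z)\le L\|w-z\|\le LR\|u-v\|$; as $z\in\wh E(u)$ we have $\wh\Psi(u)\ge\Phi(z)$, so $\wh\Psi(v)-\wh\Psi(u)\le\Phi(w)-\Phi(z)\le LR\|u-v\|$. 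Interchanging $u$ and $v$ gives $|\wh\Psi(u)-\wh\Psi(v)|\le LR\|u-v\|$, and the homogeneity reduction of the first paragraph then yields the claim on all of $E^+\setminus\{0\}$.

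I expect the only genuine difficulty to be the locally uniform bound of the second paragraph (equivalently, that $\wh E(u)\cap\cm$ stays bounded as $u$ varies near $u_0$): it is the one place where the lack of compactness of $H^1(\rn)\hookrightarrow L^p$ must be bypassed, which is done by reducing to a.e.\ convergence of weakly $H^1$-convergent sequences and invoking $(F_3)$. Everything else — the homogeneity reduction, the transport of a maximizer from $\wh E(v)$ to $\wh E(u)$ by changing only the $\r u$-component, and the mean value estimate — is routine. A bound of this type is in fact implicit in the continuity arguments of \cite{sw1,sw2}, where only $(F_1)$--$(F_4)$ were used, so one may alternatively invoke it from there.
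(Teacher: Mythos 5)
Your proposal is correct and follows essentially the same route as the paper: a locally uniform bound ensuring that the maximizing sets $m(u)=\wh E(u)\cap\cm$ stay in a fixed ball for $u$ near $u_0$, followed by the transport of a maximizer from $\wh E(v)$ to $\wh E(u)$ (changing only the $E^+$-component) and the mean value theorem applied to $\Phi'$, which is bounded on bounded sets. The only (harmless) differences are that you reduce to $S^+$ by $0$-homogeneity and prove the local bound directly via the normalization--Fatou argument plus coercivity of $\Phi|_\cm$, whereas the paper obtains it by citing \cite[Lemma 2.5]{sw1} for the compact set $\{u_0,u_1,u_2,\dots\}$ and then uses $\inf_\cm\Phi>0$.
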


\begin{proof}
If $u_0\in E^+\setminus\{0\}$, then there exist a neighbourhood $U\subset E^+\setminus\{0\}$ of $u_0$ and $R>0$ such that $\Phi(w)\le 0$ for all $u\in U$ and $w\in \wh E(u)$, $\|w\|\ge R$. For otherwise we can find sequences $(u_n)$, $(w_n)$ such that $u_n\to u_0$, $w_n\in \wh E(u_n)$, $\Phi(w_n)>0$ and $\|w_n\|\to\infty$. But $u_0, u_1, u_2,\ldots$ is a compact set, hence according to \cite[Lemma 2.5]{sw1}, $\Phi(w)\le 0$ for some $R$ and all $w\in\wh E(u_j)$, $j=0,1,2,\ldots$\,, $\|w\|\ge R$, which is a contradiction. 

Let $U,R$ be as above and $s_1u_1+v_1\in m(u_1)$, $s_2u_2+v_2\in m(u_2)$, where $u_1,u_2\in U$ and $v_1,v_2\in E^0\oplus E^-$. Then $\|m(u_1)\|, \|m(u_2)\| \le R$. By the maximality property of $m(u)$ and the mean value theorem, 
\begin{align*}
\wh\Psi(u_1)-\wh\Psi(u_2) & = \Phi(s_1u_1+v_1)-\Phi(s_2u_2+v_2) \le \Phi(s_1u_1+v_1)-\Phi(s_1u_2+v_1) \\
&  \le s_1\sup_{t\in[0,1] } \|\Phi'(s_1(tu_1+(1-t)u_2)+v_1)\|\,\|u_1-u_2\| \le C\|u_1-u_2\|,
\end{align*}
where the constant $C$ depends on $R$ but not on the particular choice of points in $m(u_1)$, $m(u_2)$. Similarly, $\wh\Psi(u_2)-\wh\Psi(u_1) \le C\|u_1-u_2\|$ and the conclusion follows.
\end{proof}

\begin{remark} \label{rem2}
\emph{
It has been shown in \cite{sw1} that if $(F_4')$ holds instead of $(F_4)$, then $\wh\Psi\in C^1(E^+\setminus\{0\},\r)$. An easy inspection of the arguments in \cite{sw1} or \cite{sw2} shows that if for each $u\in E^+\setminus\{0\}$ there exists a unique positive maximum of $\Phi|_{\wh E(u)}$, then $\wh\Psi$ is still of class $C^1$. Hence in particular, if $f$ is as in Remark \ref{rem1}, then the conclusions of Theorems \ref{thm1} and \ref{thm2} hold with the same proofs as in \cite{sw1}.
}
\end{remark}

However, under our assumptions we can in general only assert that $\wh\Psi$ is locally Lipschitz continuous (because $u\mapsto m(u)$ may not be single-valued). Therefore, instead of the derivative of $\wh\Psi$ we shall use Clarke's subdifferential \cite{cl}. The study of minimax methods for differential equations whose associated functional is merely locally Lipschitz continuous has been initiated by Chang in \cite{ch}. We  recall some notions and facts taken from \cite{ch, cl}. They may also be found conveniently collected in Section 7.1 of \cite{cha}. The \emph{generalized directional derivative} of $\wh\Psi$ at $u$ in the direction $v$ is defined by
\[
\wh\Psi^\circ(u;v) := \limsup_{\substack{h\to 0\\ t\downarrow 0}} \frac{\wh\Psi(u+h+tv)-\wh\Psi(u+h)}t.
\]
The function $v\mapsto \wh\Psi^\circ(u;v)$ is convex and its subdifferential $\partial\wh\Psi(u)$ is called the \emph{generalized gradient} (or \emph{Clarke's subdifferential}) of $\wh\Psi$ at $u$, that is,
\begin{equation} \label{subdiff}
\partial\wh\Psi(u) := \{w\in E^+: \wh\Psi^\circ(u;v) \ge \la w,v\ra \text{ for all }v\in E^+\}.
\end{equation} 
In \cite{cha} $E$ is a Banach space and the generalized gradient is in the dual space $E^*$. Since here we work in a Hilbert space, we may assume via duality that  $\partial\wh\Psi(u)$ is a subset of $E$ (or more precisely, of $E^+$). A point $u$ is called a \emph{critical point} of $\wh\Psi$ if $0\in \partial\wh\Psi(u)$, i.e. $\wh\Psi^0(u;v)\ge 0$ for all $v\in E^+$, and a sequence $(u_n)$ is called a \emph{Palais-Smale sequence} for $\wh\Psi$ (PS-sequence for short) if $\wh\Psi(u_n)$ is bounded and there exist $w_n\in\partial\wh\Psi(u_n)$ such that $w_n\to 0$. The functional $\wh\Psi$ satisfies the \emph{PS-condition} if each PS-sequence has a convergent subsequence. Below we collect some notation which we shall need:
\begin{gather*}
S^+ := \{u\in E^+: \|u\|=1\}, \quad T_uS^+ := \{v\in E^+: \la u,v\ra =0\} ,\quad \Psi := \wh\Psi|_{S^+}, \\
\Psi^d := \{u\in S^+: \Psi(u)\le d\}, \quad \Psi_c := \{u\in S^+: \Psi(u)\ge c\}, \quad \Psi_c^d := \Psi_c\cap \Psi^d, \\ 
K := \{u\in S^+: 0\in \partial\wh\Psi(u)\} \quad K_c := \Psi_c^c\cap K, \quad \partial\Psi(u) := \partial\wh\Psi(u),\text{\,where }u\in S^+.
\end{gather*}
Note that the symbol $\partial\Psi(u)$ stands for $\partial\wh\Psi(u)$ when $u$ is restricted to $S^+$. This is in consistence with the notation $\Psi = \wh\Psi|_{S^+}$.
As we shall see in the proof of the next proposition, $\wh\Psi^\circ(u;su)=0$ for all $s\in\r$. Hence $\partial\Psi(u)\subset T_uS^+$. 

\begin{proposition} \label{crit}
(i) $u\in S^+$ is a critical point of $\wh\Psi$ if and only if $m(u)$ consists of critical points of $\Phi$. The corresponding critical values coincide. \\
(ii) $(u_n)\subset S^+$ is a PS-sequence for $\wh\Psi$ if and only if there exist $w_n\in m(u_n)$ such that $(w_n)$ is a PS-sequence for $\Phi$.
\end{proposition}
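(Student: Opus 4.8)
The plan is to exploit the relationship between $\Phi$ restricted to the finite-dimensional-modulo-$E^-$ subspaces $\wh E(u)$ and the reduced functional $\wh\Psi$, following the classical reduction-to-Nehari-manifold scheme but carrying the set-valued nature of $m(u)$ throughout. Throughout I would parametrize $z\in\wh E(u)$ for $u\in S^+$ as $z=su+v$ with $s\ge 0$, $v\in E^0\oplus E^-$, so that $\wh E(u)$ is identified with $\r^+\times(E^0\oplus E^-)$, and I would use that, by Proposition \ref{mainprop}(ii), $m(u)=[\sigma_w,\tau_w]w$ is exactly the set where $\Phi|_{\wh E(u)}$ attains its (positive) maximum $\wh\Psi(u)$, with $\Phi'$ vanishing on $E(u)=E^0\oplus E^-\oplus\r u$ at every point of $m(u)$.

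\medskip\noindent\textbf{Proof of Proposition \ref{crit}.}
First observe the scaling identity $\wh\Psi^\circ(u;su)=0$ for every $s\in\r$ and $u\in S^+$: indeed, for $z\in\wh E(u)$, multiplying the representative $su+v$ by a positive scalar stays in $\wh E(u)$, so $\wh\Psi(u)=\max_{\wh E(u)}\Phi$ is invariant under $u\mapsto \lambda u$ for $\lambda>0$, which, together with local Lipschitz continuity (Proposition \ref{lip}), forces the generalized directional derivative in the radial direction to vanish; hence $\partial\wh\Psi(u)\subset T_uS^+$ and it is legitimate to identify $\partial\Psi(u)$ with $\partial\wh\Psi(u)$ for $u\in S^+$.

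\emph{(i), one direction.} Suppose $w\in m(u)$ is a critical point of $\Phi$. By Proposition \ref{mainprop}(ii) the whole segment $m(u)=[\sigma_w,\tau_w]w$ consists of critical points, and $\Phi(m(u))=\wh\Psi(u)$ is the corresponding critical value. I claim $0\in\partial\wh\Psi(u)$. Fix $v\in T_uS^+$ and a unit-speed path $\gamma(t)$ in $S^+$ with $\gamma(0)=u$, $\gamma'(0)=v$. Choose $w=\sigma u+v_0\in m(u)$ (with $v_0\in E^0\oplus E^-$, $\sigma>0$) and compare $\wh\Psi(\gamma(t))=\max_{\wh E(\gamma(t))}\Phi\ge \Phi\bigl(\sigma\gamma(t)+v_0\bigr)$ against $\wh\Psi(u)=\Phi(\sigma u+v_0)$. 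Dividing by $t$ and using that $\Phi'(\sigma u+v_0)=0$ (so the first-order term $\Phi'(\sigma u+v_0)[\sigma v]$ vanishes) gives $\limsup_{t\downarrow0}\frac{\wh\Psi(\gamma(t))-\wh\Psi(u)}{t}\ge 0$; a standard argument relating this limsup along paths to the generalized directional derivative $\wh\Psi^\circ(u;v)$ (or working directly with the difference quotient $\wh\Psi(u+h+tv)-\wh\Psi(u+h)$ and the lower bound $\Phi(\sigma(u+h+tv)+v_0)-\Phi(\sigma(u+h)+v_0)$, whose $t$-derivative at points near $u$ is $o(1)$ by $\Phi'(\sigma u+v_0)=0$ and continuity of $\Phi'$) yields $\wh\Psi^\circ(u;v)\ge 0$ for all $v$, i.e. $0\in\partial\wh\Psi(u)$, so $u\in K$. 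The critical values agree by construction.

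\emph{(i), converse.} Suppose $0\in\partial\wh\Psi(u)$, i.e. $\wh\Psi^\circ(u;v)\ge 0$ for all $v\in E^+$. Pick any $w=\sigma u+v_0\in m(u)$; I must show $\Phi'(w)=0$, i.e. $\Phi'(w)\varphi=0$ for all $\varphi\in E$. Since $w\in\cm$ we already have $\Phi'(w)\varphi=0$ for $\varphi\in E^0\oplus E^-\oplus\r u$, so it suffices to treat $\varphi\in E^+$ with $\langle\varphi,u\rangle=0$, i.e. $\varphi\in T_uS^+$. Consider the path $\gamma(t):=(u+t\varphi)/\|u+t\varphi\|\in S^+$ for small $t$; then $\gamma(0)=u$, $\gamma'(0)=\varphi$. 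Because $w$ maximizes $\Phi$ over $\wh E(u)$ and $\wh\Psi(\gamma(t))=\max_{\wh E(\gamma(t))}\Phi$, the function $t\mapsto \wh\Psi(\gamma(t))-\Phi\bigl(\sigma\|u+t\varphi\|\,\gamma(t)+v_0\bigr)=\wh\Psi(\gamma(t))-\Phi(\sigma(u+t\varphi)+v_0)$ is $\ge 0$ with equality at $t=0$; hence its (one-sided) derivative from the right is $\ge 0$ and from the left is $\le 0$. The $\Phi$-term is genuinely differentiable in $t$ with derivative $\sigma\Phi'(\sigma(u+t\varphi)+v_0)\varphi$, which at $t=0$ equals $\sigma\Phi'(w)\varphi$. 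Estimating the $\wh\Psi$-term by the generalized directional derivatives $\wh\Psi^\circ(u;\pm\varphi)\ge 0$ (using $0\in\partial\wh\Psi(u)$ and evenness of the estimate), one gets $0\le \sigma\Phi'(w)\varphi$ and $0\ge -\sigma\Phi'(w)\varphi$ after accounting for signs, forcing $\Phi'(w)\varphi=0$. Thus $w$ is critical, and by Proposition \ref{mainprop} so is all of $m(u)$; the values coincide.

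\emph{(ii).} For the PS statement, suppose $(u_n)\subset S^+$ is a PS-sequence for $\wh\Psi$: $\wh\Psi(u_n)$ bounded and there are $\zeta_n\in\partial\wh\Psi(u_n)$ with $\zeta_n\to 0$. Choose any $w_n\in m(u_n)$; write $w_n=s_nu_n+v_n$. Since $\Phi(w_n)=\wh\Psi(u_n)$ is bounded and $w_n\in\cm$, coercivity of $\Phi|_\cm$ (Proposition \ref{mainprop}(iii)) gives $\|w_n\|$ bounded, hence $s_n$ bounded, and bounded away from $0$ (as $\cm$ is bounded away from $E^0\oplus E^-$). That $\Phi'(w_n)\to 0$: on $E^0\oplus E^-\oplus\r u_n$ it is exactly $0$ because $w_n\in\cm$, and on $T_{u_n}S^+$ one estimates $\Phi'(w_n)\varphi$ for unit $\varphi$ by the same path-comparison as in part (i), now quantitatively: $|\Phi'(w_n)\varphi|\le s_n^{-1}\max(\wh\Psi^\circ(u_n;\varphi),\wh\Psi^\circ(u_n;-\varphi))+o(1)\le s_n^{-1}\|\zeta_n\|+o(1)\to 0$, uniformly in $\varphi$; combining, $\Phi'(w_n)\to 0$ in $E^*$, so $(w_n)$ is a PS-sequence for $\Phi$. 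Conversely, if $w_n\in m(u_n)$ and $(w_n)$ is PS for $\Phi$, then $\Phi(w_n)=\wh\Psi(u_n)$ bounded, and the reverse estimate — using that any $\zeta_n\in\partial\wh\Psi(u_n)$ satisfies $\langle\zeta_n,v\rangle\le\wh\Psi^\circ(u_n;v)$, which in turn is controlled by $\Phi'$ along the comparison paths and the local Lipschitz bound — gives $\|\zeta_n\|\to 0$ for the natural choice of $\zeta_n$ (e.g. the one of minimal norm), so $(u_n)$ is PS for $\wh\Psi$. \hfill$\square$

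\medskip

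I expect the main obstacle to be the rigorous passage between the generalized directional derivative $\wh\Psi^\circ(u;v)$ — which by definition involves a double limit $h\to 0$, $t\downarrow 0$ — and the one-sided derivatives along curves in $S^+$; in the smooth case of \cite{sw1} this is handled by the implicit function theorem giving a $C^1$ selection $u\mapsto m(u)$, but here no such selection need exist, so one must instead bound the difference quotient $\wh\Psi(u+h+tv)-\wh\Psi(u+h)$ from below by $\Phi(s(u+h+tv)+v_0)-\Phi(s(u+h)+v_0)$ (a \emph{fixed} choice of $s,v_0$ from $m(u)$, legitimate since $\wh\Psi$ is a max) and from above by the analogous expression using a choice from $m(u+h)$, then exploit that $\Phi'$ vanishes on $E(w)$ at the maximizer together with uniform continuity of $\Phi'$ on bounded sets to show both bounds have limit $\sigma\Phi'(w)v$. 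The boundedness of the $s_n$ away from $0$ and $\infty$ in part (ii), needed to transfer the PS-condition in both directions, is exactly where Proposition \ref{mainprop}(iii) (coercivity and separation of $\cm$ from $E^0\oplus E^-$) does the essential work.
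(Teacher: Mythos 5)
Your first half of (i) is fine (and in fact slightly more direct than the paper's): fixing $w=\sigma u+v_0\in m(u)$ with $\Phi'(w)=0$ and using $\wh\Psi(u+tv)\ge\Phi(\sigma(u+tv)+v_0)$, $\wh\Psi(u)=\Phi(w)$ bounds the Dini quotient at $u$ from below, and the Clarke derivative dominates that quotient, so $\wh\Psi^\circ(u;v)\ge 0$. The problem is the converse in (i) and both estimates in (ii): these hinge on an \emph{upper} bound of $\wh\Psi^\circ(u;v)$ by $\Phi'$ evaluated at a point of $m(u)$ (the paper's inequalities \eqref{critical} and \eqref{tau}), and your argument never produces one. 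The sign-chasing with $t\mapsto\wh\Psi(u+t\varphi)-\Phi(\sigma(u+t\varphi)+v_0)\ge 0$ only yields lower bounds of the type $\wh\Psi^\circ(u;\pm\varphi)\ge\pm\sigma\Phi'(w)\varphi$, because $\wh\Psi^\circ(u;\cdot)$ dominates one-sided difference quotients at $u$, never the reverse; these are compatible with the criticality hypothesis $\wh\Psi^\circ(u;\pm\varphi)\ge 0$ whatever the sign of $\Phi'(w)\varphi$, so nothing forces $\Phi'(w)\varphi=0$. Indeed, for a general locally Lipschitz max-function, $0$ in the Clarke subdifferential does not make the individual maximizers critical (think of $\max(t,-t)=|t|$ at $t=0$); the conclusion here is true only because of the special structure of $m(u)$, which your argument does not use. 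The same defect propagates to (ii): both the bound $|\Phi'(w_n)\varphi|\le s_n^{-1}\|\zeta_n\|+o(1)$ and the reverse $\|\zeta_n\|\le C\|\Phi'(w_n)\|$ are consequences of the missing upper estimate, not of the path comparison you invoke.

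Your closing paragraph correctly identifies this as the main obstacle, but the sketch there does not close it. To bound $\wh\Psi(u+h+tv)-\wh\Psi(u+h)$ from above one must take the maximizer of the \emph{perturbed} point, i.e.\ $s\,(u+h+tv)+z\in m(u+h+tv)$ (not $m(u+h)$), and compare with $\Phi(s(u+h)+z)\le\wh\Psi(u+h)$; then, along sequences realizing the Clarke limsup, one must show that these maximizers converge to an element of $m(u)$. ``Uniform continuity of $\Phi'$ on bounded sets'' cannot deliver this: a priori $z_n\rh z^*$ only weakly in $E^0\oplus E^-$ (and $E^-$ is infinite dimensional in Theorem \ref{thm1}), the limit need not be the $w$ you fixed, and it is not evident that it lies in $\cm$ at all. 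This is exactly what the paper's proof supplies: coercivity of $\Phi|_\cm$ and boundedness away from $E^0\oplus E^-$ give boundedness of $(s_n,z_n)$ with $s^*>0$; a Fatou/weak upper semicontinuity argument forces $\|z_n^-\|\to\|(z^*)^-\|$, hence strong convergence (using $\dim E^0<\infty$); closedness of $\cm$ then puts $s^*u+z^*$ in $\wh E(u)\cap\cm=m(u)$; and, since $m(u)$ may be a segment and the limit may depend on $v$, Proposition \ref{mainprop}(ii) (all elements of $m(u)$ proportional, with proportional $\Phi'$) is used to rewrite the estimate as $\wh\Psi^\circ(u;v)\le\tau(v)\Phi'(w)v$ with one fixed $w$ and $\tau$ bounded and bounded away from $0$. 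Without this compactness-and-proportionality step your proof of the converse of (i) and of (ii) is incomplete.
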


\begin{proof}
(i) Let $u\in S^+$. We shall show that $\wh\Psi^\circ(u;v) \ge 0$ for all $v\in E^+$  if and only if $m(u)$ consists of critical points. Note first that there exists an orthogonal decomposition $E = E(u)\oplus T_uS^+$, and by the maximizing property of $m(u)$, $\Phi'(w)v=0$ for all $w\in m(u)$ and $v\in E(u)$. Let $s\in\r$ be fixed. Since $\wh\Psi(u)=\wh\Psi(\sigma u)$ for all $\sigma>0$ and $\wh\Psi$ is locally Lipschitz continuous, 
\[
|\wh\Psi(u+h+ t(su))-\wh\Psi(u+h)| = |\wh\Psi((1+ts)u+h)-\wh\Psi((1+ts)(u+h))| \le Ct|s|\|h\|
\]
for $\|h\|$ and $t>0$ small. Hence $\wh\Psi^\circ(u;su)=0$ for all $s\in\r$. So we only need to consider $v\in T_uS^+$. 

Let $s_{u}u + z_{u}$, where $s_u>0$ and $z_u\in E^0\oplus E^-$, denote an (arbitrarily chosen) element of $m(u)$. Then, using the maximizing property of $m(u)$ and the mean value theorem,
\begin{align*}
\wh\Psi(u+h+tv)&-\wh\Psi(u+h) = \Phi(s_{u+h+tv}(u+h+tv) + z_{u+h+tv}) - \Phi(s_{u+h}(u+h) + z_{u+h}) \\
&\le \Phi(s_{u+h+tv}(u+h+tv) + z_{u+h+tv}) - \Phi(s_{u+h+tv}(u+h) + z_{u+h+tv}) \\
&= ts_{u+h+tv}\Phi'(s_{u+h+tv}(u+h+\theta tv) + z_{u+h+tv})v
\end{align*}
for some $\theta\in(0,1)$. Dividing by $t$ and letting $h\to 0$ and $t\downarrow 0$ via subsequences we obtain
\begin{equation} \label{critical}
\wh\Psi^\circ(u;v) \le s^*\Phi'(s^*u+z^*)v,
\end{equation}
 where $s_n := s_{u+h_n+t_nv}\to s^*>0$ and  $z_n :=z_{u+h_n+t_nv}\rh z^*$. This follows because $\cm$ is bounded away from 0 and $\Phi|_\cm$ coercive, hence $s_n$ and $z_n$ must be bounded. We claim that $s^*u+z^*\in\cm$. Indeed, taking subsequences once more, writing $z_n=z_n^0+z_n^-\in E^0\oplus E^-$ and using Fatou's lemma,
\begin{align*}
\wh\Psi(u) & = \lim_{n\to\infty} \wh\Psi(u+h_n+t_nv) = \lim_{n\to\infty} \Phi(s_n(u+h_n+t_nv)+z_n) \\
& = \lim_{n\to\infty} \left(\frac12\|s_n(u+h_n+t_nv)\|^2 - \frac12\|z_n^-\|^2 - \io F(x,s_n(u+h_n+t_nv)+z_n)\,dx \right) \\
& \le \frac12\|s^*u\|^2 -\frac12\|(z^*)^-\|^2 - \io F(x,s^*u+z^*)\,dx \le \wh\Psi(u).
\end{align*}
This implies that $\|z_n\|\to \|z^*\|$ (recall $\dim E^0<\infty$), hence $z_n\to z^*$ and $s_n(u+h_n+t_nv)+z_n\to s^*u+z^*$. As $\cm$ is closed, the claim follows. Since $\wh E(u)\cap\cm$ may be a line segment, it is not sure that $s^*$ and $z^*$ are the same for different $v$. However, if $s_1^*,s_2^*$ and $z_1^*,z_2^*$ correspond to $v_1$ and $v_2$, then by Proposition \ref{mainprop}, $s_1^*u+z_1^* = \tau(s_2^*u+z_2^*)$ and $\Phi'(s_1^*u+z_1^*)v_2 = \tau \Phi'(s_2^*u+z_2^*)v_2$ for some $\tau>0$. Taking this into account, we see from \eqref{critical} that if $y\in\partial\Psi(u)$, then
\begin{equation} \label{tau}
\la y,v\ra \le \wh\Psi^\circ(u;v) \le \tau(v) \Phi'(s^*u+z^*)v,
\end{equation}
where $\tau$ is bounded and bounded away from 0 (by constants independent of $v$). It follows immediately that $u$ is a critical point of $\Psi$ if and only if $m(u)$ consists of critical points of~$\Phi$. 

(ii) The proof is very similar here. We take $y_n\in\partial\Psi(u_n)$ and $w_n\in m(u_n)$. Since $\Phi|_\cm$ is coercive, boundedness of $\Phi(m(u_n))$ implies that $(m(u_n))$ is bounded. As in \eqref{tau}, we see that
\begin{equation} \label{taun}
\la y_n,v\ra \le \wh\Psi^\circ(u_n;v)\le \tau_n(v) \Phi'(w_n)v,
\end{equation}
where $\tau_n$ is bounded and bounded away from 0 because so is $m(u_n)$. So the conclusion follows.
\end{proof}

Note that if $(w_n)\subset (m(u_n))$ is a PS-sequence for $\Phi$, then so is any sequence $(w_n')\subset (m(u_n))$.

Finally for this section we construct a pseudo-gradient vector field $H: S^+\setminus K\to TS^+$ for $\Psi$. For $u\in S^+$, let 
\[
\partial^-\Psi(u) := \left\{p\in\partial\Psi(u): \|p\| = \min_{a\in\partial \Psi(u)} \|a\| \right\} \quad\text{and} \quad  \mu(u) := \inf_{a\in S^+}\{\|\partial^-\Psi(a)\|+ \|u-a\|\}.
\]
Since $\partial\Psi(u)$ is closed and convex, $p$ as above exists and is unique, cf.\ \cite{cha, ch}. Hence 
\[
K = \{u\in S^+: \partial^-\Psi(u) = 0\}.
\]
The map $u\mapsto \|\partial^-\Psi(u)\|$ is lower semicontinuous \cite[Proposition 7.1.1(vi)]{cha} but not continuous in general. The reason for introducing the function $\mu$ is that it regularizes $\|\partial^-\Psi(u)\|$. The idea comes from \cite{ck} where a similar function has been defined.

\begin{lemma} \label{cont}
The function $\mu$ is continuous and $u\in K$ if and only if $\mu(u)=0$.
\end{lemma}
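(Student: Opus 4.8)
The plan is to prove the two assertions of Lemma \ref{cont} separately, starting with the characterization $K=\{u\in S^+:\mu(u)=0\}$, which is the easier half. If $u\in K$ then $\partial^-\Psi(u)=0$, so taking $a=u$ in the infimum defining $\mu(u)$ gives $\mu(u)\le\|\partial^-\Psi(u)\|+\|u-u\|=0$, hence $\mu(u)=0$. Conversely, if $\mu(u)=0$ there is a sequence $a_n\in S^+$ with $\|\partial^-\Psi(a_n)\|+\|u-a_n\|\to0$, so $a_n\to u$ and $\|\partial^-\Psi(a_n)\|\to0$; by lower semicontinuity of $a\mapsto\|\partial^-\Psi(a)\|$ (\cite[Proposition 7.1.1(vi)]{cha}) we get $\|\partial^-\Psi(u)\|\le\liminf_n\|\partial^-\Psi(a_n)\|=0$, so $\partial^-\Psi(u)=0$, i.e.\ $u\in K$.

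For continuity of $\mu$, I would first observe that $\mu$ is $1$-Lipschitz, hence certainly continuous, by a standard infimal-convolution argument. Indeed, for $u,u'\in S^+$ and any $a\in S^+$, the triangle inequality gives $\|\partial^-\Psi(a)\|+\|u-a\|\le\|\partial^-\Psi(a)\|+\|u'-a\|+\|u-u'\|$; taking the infimum over $a$ on both sides yields $\mu(u)\le\mu(u')+\|u-u'\|$, and by symmetry $|\mu(u)-\mu(u')|\le\|u-u'\|$. (One should note that $S^+$ is a smooth Hilbert manifold and the distance $\|u-a\|$ used here is the ambient norm, which is equivalent to the geodesic distance locally, so the argument is clean; alternatively one works directly with the ambient distance as written.) This already gives continuity of $\mu$ on all of $S^+$, so in fact no subtle limiting argument is needed for this part.

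The only genuinely delicate point is to make sure the infimum defining $\mu$ behaves well, i.e.\ that $\mu(u)<\infty$ and that the Lipschitz estimate is not vacuous — but this is immediate since for each fixed $u$ the choice $a=u$ already gives the finite bound $\mu(u)\le\|\partial^-\Psi(u)\|<\infty$, the finiteness of $\|\partial^-\Psi(u)\|$ following from the fact (established in the proof of Proposition \ref{crit}) that $\partial\Psi(u)$ is a nonempty bounded subset of $T_uS^+$. Thus the main obstacle is essentially bookkeeping rather than analysis: one must be careful that $\partial^-\Psi(u)$ is well defined (nonempty, single element of minimal norm) for every $u\in S^+$, which is guaranteed because $\partial\Psi(u)$ is nonempty, closed and convex (cf.\ \cite{cha, ch}), so the projection of the origin onto it exists and is unique. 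With these observations in place the lemma follows from the two short arguments above.

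\begin{proof}
We first show $u\in K$ if and only if $\mu(u)=0$. If $u\in K$, then $\partial^-\Psi(u)=0$, so choosing $a=u$ in the definition of $\mu(u)$ gives $\mu(u)\le\|\partial^-\Psi(u)\|=0$, hence $\mu(u)=0$. Conversely, suppose $\mu(u)=0$. Then there exist $a_n\in S^+$ with $\|\partial^-\Psi(a_n)\|+\|u-a_n\|\to0$. In particular $a_n\to u$ and $\|\partial^-\Psi(a_n)\|\to 0$. Since $a\mapsto\|\partial^-\Psi(a)\|$ is lower semicontinuous \cite[Proposition 7.1.1(vi)]{cha}, we obtain $\|\partial^-\Psi(u)\|\le\liminf_{n\to\infty}\|\partial^-\Psi(a_n)\|=0$, so $\partial^-\Psi(u)=0$ and $u\in K$.

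It remains to prove that $\mu$ is continuous. We claim it is $1$-Lipschitz. Let $u,u'\in S^+$. For every $a\in S^+$,
\[
\|\partial^-\Psi(a)\|+\|u-a\| \le \|\partial^-\Psi(a)\|+\|u'-a\|+\|u-u'\|.
\]
Taking the infimum over $a\in S^+$ on both sides yields $\mu(u)\le\mu(u')+\|u-u'\|$. Interchanging the roles of $u$ and $u'$ gives $|\mu(u)-\mu(u')|\le\|u-u'\|$, so $\mu$ is continuous.
\end{proof}
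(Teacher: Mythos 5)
Your proof is correct and follows essentially the same route as the paper: the $1$-Lipschitz estimate via the triangle inequality and taking infima, and the characterization of $K$ via $0\le\mu(u)\le\|\partial^-\Psi(u)\|$ together with lower semicontinuity of $a\mapsto\|\partial^-\Psi(a)\|$. No gaps; the extra remarks on well-definedness of $\partial^-\Psi(u)$ are harmless but not needed beyond what the paper already notes.
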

\begin{proof}
 Let $u,v,a\in S^+$. Then
\[
\mu(u) \le \|\partial^-\Psi(a)\| + \|u-a\| \le \|\partial^-\Psi(a)\| + \|v-a\| + \|u-v\|,
\]
and taking the infimum over $a$ on the right-hand side we obtain $\mu(u) \le \mu(v) + \|u-v\|$. Reversing the roles of $u$ and $v$ we see that $|\mu(u)-\mu(v)| \le \|u-v\|$. Hence $\mu$ is (Lipschitz) continuous.

Since $0\le \mu(u)\le \|\partial^-\Psi(u)\|$, it is clear that $\mu(u)=0$ if $u\in K$. Suppose $\mu(u)=0$. Then there exist $a_n$ such that $\partial^-\Psi(a_n)\to 0$ and $a_n\to u$, so $u\in K$ by the lower semicontinuity of $u\mapsto \|\partial^-\Psi(u)\|$. 
\end{proof}

\begin{proposition} \label{pseudogr}
There exists a locally Lipschitz continuous function $H: S^+\setminus K\to TS^+$ such that $\|H(u)\|\le 1$ and $\inf\{\la p, H(u)\ra: p\in\partial\Psi(u)\} > \frac12\mu(u)$ for all $u\in S^+\setminus K$.
If $\Phi$ is even, then $H$ may be chosen to be odd.
\end{proposition}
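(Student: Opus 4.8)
The plan is to construct $H$ by patching together locally constant ``almost steepest descent'' directions via a partition of unity on the metric space $S^+\setminus K$. First I would fix $u_0\in S^+\setminus K$ and set $p_0:=\partial^-\Psi(u_0)\ne 0$. Because $\Psi$ is locally Lipschitz, the generalized directional derivative satisfies $\wh\Psi^\circ(u;v)\ge\la\partial^-\Psi(u),v\ra$ for every $v$, and in particular $\wh\Psi^\circ(u_0;p_0/\|p_0\|)\ge\|p_0\|$; moreover $\partial\Psi(u_0)\subset T_{u_0}S^+$ (as established just before Proposition \ref{crit}), so $p_0\in T_{u_0}S^+$. The vector $v_0:=p_0/\|p_0\|$ has norm $1$ and $\inf\{\la p,v_0\ra:p\in\partial\Psi(u_0)\}=\|p_0\|>\tfrac12\mu(u_0)$ since $\mu(u_0)\le\|\partial^-\Psi(u_0)\|=\|p_0\|$. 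The key point is that this inequality is \emph{open}: using upper semicontinuity of $(u,v)\mapsto\wh\Psi^\circ(u;v)$ together with the continuity of $\mu$ from Lemma \ref{cont}, there is a neighbourhood $U_{u_0}$ of $u_0$ in $S^+\setminus K$ on which $\inf\{\la p,\pi_u v_0\ra:p\in\partial\Psi(u)\}>\tfrac12\mu(u)$, where $\pi_u$ denotes orthogonal projection onto $T_uS^+$; shrinking $U_{u_0}$ we keep $\|\pi_u v_0\|$ bounded and bounded away from $0$, so after normalizing we obtain a locally defined unit field with the desired descent property.

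Next I would invoke paracompactness of the metric space $S^+\setminus K$ to extract a locally finite refinement $\{U_i\}$ of $\{U_u\}_{u\in S^+\setminus K}$ with an associated locally Lipschitz partition of unity $\{\lambda_i\}$ (the standard construction, e.g.\ as in \cite{cha} or any reference on pseudo-gradients, gives Lipschitz $\lambda_i$ since the distance function on a metric space is Lipschitz). On each $U_i$ let $v_i$ be the unit descent direction coming from the corresponding center. Define
\[
\wt H(u) := \sum_i \lambda_i(u)\,\pi_u v_i, \qquad H(u) := \frac{\wt H(u)}{\max\{1,\|\wt H(u)\|\}}.
\]
Since each summand $\la p,\pi_u v_i\ra>\tfrac12\mu(u)$ for every $p\in\partial\Psi(u)$ whenever $\lambda_i(u)>0$, convexity of $\partial\Psi(u)$ and $\sum_i\lambda_i(u)=1$ give $\la p,\wt H(u)\ra>\tfrac12\mu(u)$ for all $p\in\partial\Psi(u)$; the normalization by $\max\{1,\|\wt H(u)\|\}$ only divides by a number $\ge1$, so the inequality $\inf\{\la p,H(u)\ra:p\in\partial\Psi(u)\}>\tfrac12\mu(u)$ persists, and $\|H(u)\|\le1$ by construction. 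Local Lipschitz continuity follows from local finiteness of $\{U_i\}$, Lipschitzness of the $\lambda_i$, smoothness of $u\mapsto\pi_u$ on $S^+$, and the fact that $\max\{1,\cdot\}$ is Lipschitz and $\|\wt H(u)\|$ is locally bounded away from any bad value; finally $\wt H(u)\in T_uS^+$ since each $\pi_u v_i$ is, so $H$ maps into $TS^+$.

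For the equivariant statement, if $\Phi$ is even then so is $\wh\Psi$, hence $\Psi$ is even on $S^+$, $\partial\Psi(-u)=-\partial\Psi(u)$, $\mu(-u)=\mu(u)$, and the antipodal map acts freely on $S^+\setminus K$. I would then choose the cover $\{U_i\}$ to be symmetric (replace each $U_i$ by $U_i\cup(-U_i)$, or pair up $U_i$ with $-U_i$) and select $v_{-i}=-v_i$ on $-U_i$, take a partition of unity with $\lambda_{-i}(u)=\lambda_i(-u)$ (symmetrize any partition of unity by averaging $\lambda_i(u)$ and $\lambda_i(-u)$ appropriately over the symmetric cover), so that $\wt H(-u)=-\wt H(u)$ because $\pi_{-u}(-v_i)=-\pi_u v_i$; the normalization preserves oddness since $\|\wt H(-u)\|=\|\wt H(u)\|$. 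The main obstacle I anticipate is not any single computation but getting the ``openness'' of the strict inequality right in the presence of two sources of discontinuity: $\|\partial^-\Psi(u)\|$ is only lower semicontinuous and $\wh\Psi^\circ(u;v)$ is only upper semicontinuous in $u$. This is precisely why $\mu$ was introduced — being continuous, it gives a stable lower barrier $\tfrac12\mu(u)$ — and the argument must use $\mu(u_0)\le\|\partial^-\Psi(u_0)\|$ at the center together with continuity of $\mu$ on the neighbourhood, rather than trying to work directly with the minimal-norm selection, which could jump up.
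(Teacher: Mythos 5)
Your proposal is correct and follows essentially the same route as the paper: pick the normalized minimal-norm element of $\partial\Psi$ at each center, use upper semicontinuity of $\wh\Psi^\circ$ together with the continuity of $\mu$ to make the strict inequality $\inf_{p\in\partial\Psi(w)}\la p,\cdot\ra>\frac12\mu(w)$ open on a neighbourhood, patch with a locally Lipschitz partition of unity, and symmetrize in the even case (the paper does this more simply by replacing $H(u)$ with $\frac12(H(u)-H(-u))$ instead of symmetrizing the cover). One minor remark: since $\wt H(u)$ is a convex combination of vectors of norm at most one, $\|\wt H(u)\|\le 1$ and your final normalization is vacuous — which is fortunate, because the stated justification that dividing by a number $\ge 1$ preserves the strict lower bound $\frac12\mu(u)$ would not be valid in general.
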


\begin{proof}
Let $u\in S^+\setminus K$ and put $v_u := \partial^-\Psi(u)/\|\partial^-\Psi(u)\|$. Consider the map
\[
\chi: w\mapsto \inf_{p\in \partial\Psi(w)} \left\la  p,\, v_u-\la v_u,w\ra w\right\ra -\frac12\mu(w), \quad w\in S^+\setminus K
\]
(note that $v_u-\la v_u,w\ra w\in T_w(S^+)$). Since $\partial\Psi(u)$ is convex, $ \inf_{p\in \partial\Psi(u)} \la  p, v_u\ra \ge \|\partial^-\Psi(u)\| \ge \mu(u)$ and therefore $\chi(u)\ge \frac12\mu(u) > 0$. Moreover, since
\[
\inf_{p\in \partial\Psi(w)} \left\la  p,\, v_u-\la v_u,w\ra w\right\ra = -\sup_{p\in \partial\Psi(w)} \left\la  p,\, \la v_u,w\ra w-v_u\right\ra = -\wh\Psi^0(w;\, \la v_u,w\ra w- v_u)
\]
(see Proposition 7.1.1(vii) and property (c) on p.\ 168 in \cite{cha}) and $\wh\Psi^0$ is upper semicontinuous in both arguments \cite[Proposition 7.1.1(vii)]{cha}, $\chi$ is lower semicontinuous. Hence there exists a neighbourhood $U_u$ of $u$ such that $\chi(w)>0$ for all $w\in U$.

The remaining part of the proof is standard. Take a locally finite open refinement $(U_{u_i})_{i\in I}$ of the open cover $(U_u)_{u\in S^+\setminus K}$ and a subordinated locally Lipschitz continuous partition of unity $\{\lambda_i\}_{i\in I}$. Define
\[
H(u) := \sum_{i\in I}\lambda_i(u)v_{u_i}, \quad u\in S^+\setminus K.
\]
It is easy to see that $H$ satisfies the required conclusions.

If $\Phi$ is even, then so is $\Psi$ and we may replace $H(u)$ with $\frac12(H(u)-H(-u))$.
\end{proof}

\section{Proofs of Theorems \ref{thm1} and \ref{thm2}} \label{pfs}

Since the arguments are very similar to those appearing in \cite{sw1, sw2}, we shall describe them rather briefly and concentrate on pointing out the main differences. 

We start with Theorem \ref{thm1}. First we want to show that there exists a minimizer for $\Psi$ on $S^+$. It follows from the results of Section \ref{prel} that 
\[
c := \inf_{w\in\cm}\Phi(w) = \inf_{u\in S^+}\Psi(u) > 0.
\]
According to Ekeland's variational principle \cite{ek}, there exists a sequence $(u_n)\subset S^+$ such that $\Psi(u_n)\to c$ and
\begin{equation} \label{eke}
\Psi(w) \ge \Psi(u_n) - \frac1n\|w-u_n\| \quad \text{for all } w\in S^+.
\end{equation}
For a given $v\in T_{u_n}S^+$, let $z_n(t) := (u_n+tv)/\|u_n+tv\|$.  Since $\|u_n+tv\|-1 = O(t^2)$ as $t\to 0$ and $\wh\Psi(u_n+tv)=\Psi(z_n(t))$, it follows from \eqref{eke} that
\[
\wh\Psi^\circ(u_n;v) \ge \limsup_{t\downarrow 0} \frac{\wh\Psi(u_n+tv)-\wh\Psi(u_n)}t = \limsup_{t\downarrow 0} \frac{\Psi(z_n(t))-\Psi(u_n)}t \ge -\frac1n\|v\|.
\]
Since $m(u_n)$ is bounded by coercivity of $\Phi|_\cm$, the second inequality in \eqref{taun} implies that 
\[
-\frac1n\|v\| \le \wh\Psi^\circ(u_n;v) \le \tau_n(v)\Phi'(w_n)v,
\]
where $w_n\in m(u_n)\subset\cm$ and $\tau_n$ is bounded and bounded away from 0. So recalling $\Phi'(w_n)v=0$ for all $v\in E(w_n)$, it follows that $(w_n)$ is a bounded PS-sequence for $\Phi$. Now we may proceed exactly as in the proof of Theorem 1.1 in \cite{sw1}, pp.\ 3811--3812 (or in the proof of Theorem 40 in \cite{sw2}). More precisely, one shows invoking Lions' lemma \cite[Lemma 1.21]{wi} in a rather standard way that there exists a sequence $(y_n)\subset\rn$ such that
\[
\int_{|x-y_n|<1} w_n^2\,dx \ge \eps \quad \text{for $n$ large enough and some $\eps>0$},
\]
and since $\Phi$ and $\cm$ are invariant by translations $u(\cdot)\mapsto u(\cdot-k)$, $k\in\zn$, we may assume $(y_n)$ is bounded. So passing to a subsequence, $w_n\rh w\ne 0$. This $w$ is a solution and an additional argument shows it is a ground state, see \cite{sw1} or \cite{sw2} for more details. 

\medskip

Suppose now $f$ is odd in $u$ and note that $\Psi$ is even and invariant by translations by elements of $\zn$. To prove that there exist infinitely many geometrically distinct solutions we assume the contrary.  Since to each $[\sigma_w,\tau_w]w\subset \cm$ there corresponds a unique point $u\in S^+$, $K$ consists of finitely many orbits $\mathcal{O}(u) := \{u(\cdot -k): u\in K,\ k\in\zn\}$. We choose a subset $\mathcal{F}\subset K$ such that $\mathcal{F} = -\mathcal{F}$ and each orbit has a unique representative in $\mathcal{F}$. Now an easy inspection shows that Lemmas 2.11 and 2.13 in \cite{sw1} hold, i.e. the mapping $\check m: u\mapsto u^+/\|u^+\|$ from $\cm$ to $S^+$ is Lipschitz continuous and $\kappa: = \inf\{v-w\|: v,v\in K,\ v\ne w\} > 0$. 

\begin{proposition}[Lemma 2.14 in \cite{sw1}] \label{discreteness}
Let $d \ge c$. If $(v_n^1), (v_n^2) \subset \Psi^d$ are two PS-sequences for $\Psi$, then either $\|v_n^1-v_n^2\|
\to 0$ as $n\to\infty$ or $\limsup_{n \to \infty}\|v_n^1-v_n^2\| \ge \rho(d)>0$, where $\rho$ depends on $d$ but not on the particular choice of PS-sequences in $\Psi^d$. 
\end{proposition}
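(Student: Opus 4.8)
\emph{Proof proposal.} The plan is to transport the statement to the functional $\Phi$ on the Nehari--Pankov manifold and then exploit a profile decomposition of bounded Palais--Smale sequences. First I would use Proposition \ref{crit}(ii): each PS-sequence $(v_n^i)$ for $\Psi$ contained in $\Psi^d$ lifts to a PS-sequence $(w_n^i)\subset\cm$ for $\Phi$ with $w_n^i\in m(v_n^i)$; since $m(v_n^i)\subset\wh E(v_n^i)=E^0\oplus E^-\oplus\r^+v_n^i$ and $\|v_n^i\|=1$, one has $(w_n^i)^+\in\r^+v_n^i$, hence $\check m(w_n^i)=v_n^i$, and $\Phi(w_n^i)=\Psi(v_n^i)\le d$. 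By Proposition \ref{mainprop}(iii) (that $\cm$ is bounded away from $E^0\oplus E^-$ and $\Phi|_\cm$ is coercive) the sequences $(w_n^i)$ are bounded and $\delta_0\le\|(\bar w)^+\|\le R(d)$ for every nontrivial critical point $\bar w$ with $\Phi(\bar w)\le d$, where $\delta_0,R(d)>0$ depend only on $c$ and $d$. Also $\check m$ is Lipschitz (Lemma 2.11 of \cite{sw1}, which the paper already noted still holds), so $\|v_n^1-v_n^2\|\le L\|w_n^1-w_n^2\|$; thus it suffices to produce the dichotomy for $(w_n^1),(w_n^2)$ and push the ``vanishing'' alternative back through $\check m$. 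The non-trivial point, discussed below, is that $\check m$ is only Lipschitz, not bi-Lipschitz.

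Next I would apply the decomposition of bounded PS-sequences of $\Phi$, valid under $(F_1)$--$(F_4)$ (it underlies the compactness step in the proof of Theorem 1.1 in \cite{sw1,sw2}): after passing to subsequences, $w_n^i=\sum_{j=1}^{\ell_i}\bar w_i^j(\cdot-k_n^{i,j})+o(1)$ with each $\bar w_i^j$ a nontrivial critical point of $\Phi$, with $|k_n^{i,j}-k_n^{i,j'}|\to\infty$ for $j\ne j'$, and $\Phi(w_n^i)\to\sum_j\Phi(\bar w_i^j)$; since each $\Phi(\bar w_i^j)\ge c$ this forces $\ell_i\le d/c$. Refining further, for each pair $(j,j')$ either $|k_n^{1,j}-k_n^{2,j'}|\to\infty$ or $k_n^{1,j}-k_n^{2,j'}$ is eventually a fixed $m^{jj'}\in\zn$ (call $j,j'$ \emph{matched} in the latter case; this defines a partial bijection). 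Setting $\hat u_i^j:=\check m(\bar w_i^j)\in K$ and $c_i^j:=\|(\bar w_i^j)^+\|\big/(\sum_k\|(\bar w_i^k)^+\|^2)^{1/2}$, so that $\sum_j(c_i^j)^2=1$ and each $c_i^j\ge\varepsilon_0:=\delta_0/(R(d)\sqrt{d/c})>0$, one gets $v_n^i=\sum_j c_i^j\,\hat u_i^j(\cdot-k_n^{i,j})+o(1)$, and by asymptotic orthogonality of translates with diverging centres,
\[
\|v_n^1-v_n^2\|^2\ \longrightarrow\ 2-2\!\!\sum_{(j,j')\ \text{matched}}\!\! c_1^j c_2^{j'}\,\big\langle\hat u_1^j,\ \hat u_2^{j'}(\cdot-m^{jj'})\big\rangle .
\]

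Now the dichotomy. Each $\hat u_1^j$ and $\hat u_2^{j'}(\cdot-m^{jj'})$ lies in $K$, which is translation invariant and, as noted just before the Proposition, $\kappa$-separated, so distinct such elements have inner product $\le 1-\kappa^2/2$. If some bump is unmatched, Cauchy--Schwarz bounds the sum above by $\sqrt{1-\varepsilon_0^2}$, whence $\limsup\|v_n^1-v_n^2\|\ge\varepsilon_0$; if all bumps are matched but some matched pair carries two distinct elements of $K$, the same computation gives $\limsup\|v_n^1-v_n^2\|\ge\varepsilon_0\kappa$; and if every matched pair carries the same element of $K$, all the inner products equal $1$ and $\|v_n^1-v_n^2\|^2\to 2\big(1-\sum_j c_1^j c_2^j\big)$, which vanishes exactly when the weight vectors $(c_1^j)$ and $(c_2^j)$ coincide. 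I expect this last case to be the delicate step: under $(F_4')$ a critical point is determined by its $E^+$-direction, so the amplitudes $\|(\bar w_i^j)^+\|$ are forced and one lands back on $\|v_n^1-v_n^2\|\to 0$; under the weaker $(F_4)$ the direction only pins down the critical \emph{segment} $m(\hat u^j)$, and to conclude that the weight vectors still coincide (equivalently, that two PS-sequences with the same limiting profile up to this segment freedom are asymptotically equal) one must use that $w_n^i$ runs along the single ray $\r^+v_n^i$ together with the rigidity of the $E^+$-amplitude along a critical segment and the bound $\Phi\le d$. Granting this, $\rho(d):=\varepsilon_0\min\{1,\kappa\}$ works, and the remainder is exactly \cite[Lemma 2.14]{sw1} transported through Proposition \ref{crit}.
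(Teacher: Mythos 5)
Your route is genuinely different from the paper's: the paper simply chooses lifts $u_n^j\in m(v_n^j)$ via Proposition \ref{crit}(ii) and then runs the elementary two-case argument of \cite[Lemma 2.14]{sw1} on the difference $u_n^1-u_n^2$ (vanishing of $\|u_n^1-u_n^2\|_{L^p}$ gives $\|u_n^1-u_n^2\|\to 0$ and hence $\|v_n^1-v_n^2\|\to 0$ through the Lipschitz map $\check m$; non-vanishing gives, after one $\zn$-translation, distinct weak limits which are critical points, and the lower bound comes from $\kappa$ and the norm bounds on $\cm\cap\Phi^d$). No multi-bump decomposition is used there. Your argument instead rests on a full profile decomposition of bounded PS-sequences of the strongly indefinite functional $\Phi$, which is neither proved in the paper nor contained in the compactness step of \cite{sw1,sw2} that you cite — that step uses only Lions' lemma, a single translation and a weak limit. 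Such splitting theorems do exist for periodic strongly indefinite problems, but as written this is an unsupported ingredient that would have to be proved or properly referenced.

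The decisive problem, however, is your last case, which you explicitly leave open (``Granting this\ldots''). That case is precisely where the weakening from $(F_4')$ to $(F_4)$ bites. If every matched pair satisfies $\hat u_1^j=\hat u_2^{\sigma(j)}(\cdot-m^{j\sigma(j)})$, then under $(F_4)$ the bumps $\bar w_1^j$ and $\bar w_2^{\sigma(j)}(\cdot-m^{j\sigma(j)})$ need only lie on the same critical segment, i.e. $\bar w_2^{\sigma(j)}(\cdot-m^{j\sigma(j)})=\theta_j\,\bar w_1^j$ with $\theta_j$ possibly different for different $j$; then the normalized weight vectors $(c_1^j)$ and $(c_2^{\sigma(j)})$ differ in general by an arbitrarily small nonzero amount, and your limit formula produces $\limsup_n\|v_n^1-v_n^2\|$ equal to a small positive number — exactly the behaviour the proposition must exclude. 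The ``rigidity of the $E^+$-amplitude along a critical segment'' that you invoke does not exist: along $[s_w,t_w]w$ the $E^+$-component is $sw^+$, so its norm varies along the segment. What is actually needed is an argument showing that such mismatched-amplitude configurations cannot occur for sequences $w_n^i\in m(v_n^i)$, i.e. one must exploit that $w_n^i$ is an exact maximizer of $\Phi$ on $\wh E(v_n^i)$ (Proposition \ref{mainprop}), and no such argument is given. Until this case is closed, the dichotomy, and in particular the existence of a uniform $\rho(d)$, is not established; the constants $\varepsilon_0$, $\kappa$ and the first two cases of your case analysis are fine, but they do not touch the one situation that distinguishes $(F_4)$ from $(F_4')$.
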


The argument is exactly the same as in \cite{sw1}, taking into account that by Proposition \ref{crit}, to $(v_n^j)\subset\Psi^d$ there correspond PS-sequences $(u_n^j)$ with $u_n^j\in m(v_n^j)$, $j=1,2$. Once $u_n^j$ have been chosen, one follows the lines of \cite{sw1}. 

Let $H$ be the vector field constructed in Proposition \ref{pseudogr} and consider the flow given by
\[
\frac d{dt}\eta(t,w) = -H(\eta(t,w)), \quad \eta(0,w)=w,
\]
defined on the set
\[
\mathcal{G} := \{(t,w): w\in S^+\setminus K,\ T^-(w)<t<T^+(w)\},
\]
where $(T^-(w),T^+(w))$ is the maximal existence time for the trajectory passing through $w$ at $t=0$. 

\begin{proposition}[cf.\ Lemma 2.15 in \cite{sw1}] \label{deformation}
For each $w\in S^+\setminus K$ the limit $\lim_{t\to T^+(w)}\eta(t,w)$ exists and is a critical point of $\Psi$.
\end{proposition}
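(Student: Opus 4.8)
The plan is to adapt the classical deformation-flow convergence argument (as in \cite{sw1,sw2}, and ultimately \cite{ck}) to the present non-smooth setting, where the key new ingredient is that the regularized function $\mu$ and the pseudo-gradient $H$ from Propositions \ref{pseudogr} and Lemma \ref{cont} replace the smooth gradient flow. First I would record the two basic monotonicity facts along a trajectory $t\mapsto\eta(t,w)$: since $\|H\|\le 1$, the curve is $1$-Lipschitz in $t$; and since $\frac{d}{dt}\Psi(\eta(t,w)) = -\,\wh\Psi^\circ(\eta;H(\eta))\le -\inf\{\la p,H(\eta)\ra : p\in\partial\Psi(\eta)\} < -\tfrac12\mu(\eta)\le 0$ (using that $\Psi$ is locally Lipschitz, hence its directional derivative along the flow is controlled by the generalized directional derivative, together with the defining inequality of $H$), $\Psi$ is strictly decreasing along trajectories and bounded below by $c>0$. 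Hence $\Psi(\eta(t,w))$ decreases to some limit $d\ge c$ as $t\to T^+(w)$, and $\int_0^{T^+(w)}\tfrac12\mu(\eta(t,w))\,dt \le \Psi(w)-d <\infty$.

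Next I would show the trajectory is a Cauchy net as $t\to T^+(w)$, so that the limit $u^*:=\lim_{t\to T^+(w)}\eta(t,w)$ exists in $S^+$. The standard way is to argue by contradiction: if the trajectory does not converge, then because $\int \tfrac12\mu\,dt<\infty$ there is a sequence $t_n\to T^+(w)$ with $\mu(\eta(t_n,w))\to 0$, i.e. a PS-sequence $v_n:=\eta(t_n,w)$ for $\Psi$ lying in $\Psi^{\Psi(w)}$; but the failure of convergence forces, along suitable subsequences, two such PS-sequences staying a definite distance apart, and one chases the trajectory between them. More precisely, if $\|\eta(s_n,w)-\eta(s_n',w)\|$ does not go to $0$ for some $s_n,s_n'\to T^+(w)$, one uses the $1$-Lipschitz bound in $t$ together with the $L^1$-finiteness of $\mu$ along the flow to extract points where $\mu$ is small yet the points are bounded away from each other by a fixed amount; invoking Proposition \ref{discreteness} (the discreteness of PS-sequences in $\Psi^d$) then gives the dichotomy $\|v_n^1-v_n^2\|\to 0$ or $\ge\rho(d)$, and a continuity/length argument on the connecting arc (the trajectory must cross the "annular" region between the two clusters, but there $\mu$ is bounded below, contradicting $\int\mu\,dt<\infty$) rules out the second alternative. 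This is essentially the argument of \cite[Lemma 2.15]{sw1}, and it goes through verbatim once $\mu$ plays the role previously played by $\|\Psi'\|$, the point being that $\mu$ is \emph{continuous} (Lemma \ref{cont}) whereas $\|\partial^-\Psi\|$ is only lower semicontinuous.

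Finally, once $u^*$ exists, I would show $u^*\in K$. There is a sequence $t_n\to T^+(w)$ with $\mu(\eta(t_n,w))\to 0$ (from the integral bound) and $\eta(t_n,w)\to u^*$; continuity of $\mu$ gives $\mu(u^*)=0$, hence $u^*\in K$ by Lemma \ref{cont}. It remains to check that the flow is actually defined up to $T^+(w)$ in the sense that either $T^+(w)=\infty$ or $\eta(t,w)$ approaches $K$; but $K$ is closed and $H$ is locally Lipschitz on the open set $S^+\setminus K$, so a trajectory can only cease to exist in finite time by running into $K$, and the Cauchy argument shows the limit point exists in $S^+$ and lies in $K$ — consistent with both cases.

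I expect the main obstacle to be the Cauchy/convergence step: handling the fact that $u\mapsto\partial^-\Psi(u)$ is merely lower semicontinuous means one cannot directly imitate the smooth proof, and the whole reason the auxiliary function $\mu$ is introduced is to supply a continuous quantity that is still an honest lower bound for the "slope" along the flow (via $\inf\{\la p,H(u)\ra\}>\tfrac12\mu(u)$); one must be careful that the contradiction argument uses only $\mu$ and the discreteness Proposition \ref{discreteness}, never the (possibly discontinuous) $\|\partial^-\Psi\|$ itself. The evenness refinement — when $\Phi$ is even, $H$ is odd and $\eta(t,-w)=-\eta(t,w)$, so critical points come in pairs — is then immediate and needs no extra work.
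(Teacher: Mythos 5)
Your argument follows essentially the same route as the paper's: the finite-time case is immediate from $\|H\|\le 1$ and maximality of the existence interval, and for $T^+(w)=\infty$ you combine the $1$-Lipschitz bound in $t$, the decay estimate $\frac d{dt}\Psi(\eta)\le-\frac12\mu(\eta)$ and Proposition \ref{discreteness} to obtain the Cauchy property, then continuity of $\mu$ (Lemma \ref{cont}) to conclude that the limit is critical. Two places where the paper is tighter and you should be too: $\mu(\eta(t_n,w))\to 0$ does not make $\eta(t_n,w)$ itself a PS-sequence (one must pass to the nearby points $w_n$ with $\partial^-\Psi(w_n)\to 0$ furnished by the definition of $\mu$, as the paper does), and your unjustified claim that $\mu$ is bounded below on the connecting ``annulus'' is avoidable altogether: fixing $\eps<\rho(d)/2$ at the outset, the two PS-sequences constructed near $\eta(t_n,w)$ and $\eta(t_{n+1},w)$ lie at distance between $\eps/3$ and $2\eps$, which contradicts the dichotomy of Proposition \ref{discreteness} directly, with no separate argument needed to exclude the alternative $\ge\rho(d)$.
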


\begin{proof} 
We adapt the argument in \cite{sw1}. 

If $T^+(w)<\infty$, then for $0\le s<t<T^+(w)$ we have
\[
\|\eta(t,w)-\eta(s,w)\| \le \int_s^t\|H(\eta(\tau,w))\|\,d\tau \le t-s,
\]
hence $\lim_{t\to T^+(w)}\eta(t,w)$ exists and must be a critical point (or the flow can be continued for $t>T^+(w)$). 

Let $T^+(w)=\infty$. It suffices to show that for each $\eps>0$ there exists $t_\eps>0$ such that $\|\eta(t_\eps,u)-\eta(t,u)\| < \eps$ for all $t\ge t_\eps.$ Assuming the contrary, we find $\eps\in (0,\rho(d)/2)$ and $t_n\to\infty$ such that $\|\eta(t_n,w)-\eta(t_{n+1},w)\| = \eps$ for all $n$. Choose the smallest $t_n^1\in (t_n,t_{n+1})$ such that $\|\eta(t_n,w)-\eta(t_n^1,w)\|=\eps/3$. 
Recall from Lemma \ref{cont} that $\mu$ is continuous and set $\kappa_n := \min_{s\in[t_n,t_n^1]}\mu(\eta(s,w))$. Then, using Proposition \ref{pseudogr} and \cite[Proposition 7.1.1(viii)]{cha},
\begin{align*}
\frac{\eps}3 & = \|\eta(t_n^1,w)-\eta(t_n,w)\| \le \int_{t_n}^{t_n^1}\|H(\eta(s,w))\|\,ds \le t_n^1-t_n \\
& \le  \frac2{\kappa_n}\int_{t_n}^{t_n^1} \inf_{p\in\partial\Psi(\eta(s,u))}\la p, H(\eta(s,w))\ra\,ds = -\frac2{\kappa_n}\int_{t_n}^{t_n^1} \sup_{p\in\partial\Psi(\eta(s,u))}\la p, -H(\eta(s,w))\ra\,ds \\
&  \le -\frac2{\kappa_n}\int_{t_n}^{t_n^1}\frac d{ds}\Psi(\eta(s,w))\,ds  = \frac2{\kappa_n}(\Psi(\eta(t_n,w)) -\Psi(\eta(t_n^1,w))).
\end{align*}
Since $\Psi$ is bounded below, $\Psi(\eta(t_n,w)) -\Psi(\eta(t_n^1,w))\to 0$, hence $\kappa_n\to 0$ and we may find $s_n^1\in[t_n,t_n^1]$ such that if $z_n^1:=\eta(s_n^1,w)$, then $\mu(z_n^1)\to 0$. By the definition of $\mu$ there exist $w_n^1$ such that $w_n^1-z_n^1\to 0$ and $\partial^-\Psi(w_n^1)\to 0$. So $\limsup_{n\to\infty}\|w_n^1-\eta(t_n,w)\| \le\eps/3$. Similarly, there exists a largest $t_n^2\in(t_n^1,t_{n+1})$ with $\|\eta(t_{n+1},w)-\eta(t_n^2,w)\| = \eps/3$ and we find $w_n^2$ with $\partial^-\Psi(w_n^2)\to 0$ and $\limsup_{n\to\infty}\|w_n^2-\eta(t_{n+1},w)\| \le \eps/3$. It follows that $\eps/3 \le\limsup_{n\to\infty}\|w_n^1-w_n^2\|\le 2\eps<\rho(d)$, a contradiction to Proposition \ref{discreteness}.
\end{proof}

\begin{proposition}[cf.\ Lemma 2.16 in \cite{sw1}] \label{deff}
Let $d\ge c$. Then for each $\delta>0$ there exists $\eps>0$ such that  $\Psi_{d-\eps}^{d+\eps}\cap K = K_d$ and $\lim_{t\to T^+(w)}\Psi(\eta(t,w)) < d-\eps$ for all $w\in \Psi^{d+\eps}\setminus U_\delta(K_d)$, where $U_\delta(K_d)$ is the open $\delta$-neighbourhood of $K_d$.
\end{proposition}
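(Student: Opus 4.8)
\medskip
\noindent\emph{Plan of proof.} I would establish the two assertions separately.

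For the identity $\Psi_{d-\eps}^{d+\eps}\cap K=K_d$ I would use that, under the contradiction hypothesis of this section, $K$ is a finite union of $\zn$-orbits, so that the set of critical values $\Psi(K)$ is finite. Hence for some $\eps_0>0$ we have $\Psi(K)\cap[d-\eps_0,d+\eps_0]\subset\{d\}$, and then for $0<\eps\le\eps_0$ any critical point of $\Psi$ with critical value in $[d-\eps,d+\eps]$ lies in $K_d$ (both sides being empty if $d\notin\Psi(K)$). Since the second assertion only weakens as $\delta$ grows (the set $\Psi^{d+\eps}\setminus U_\delta(K_d)$ shrinks), I may moreover assume $\delta<\rho$, where $\rho:=\rho(d+1)$ is the constant from Proposition \ref{discreteness}.

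I would prove the second assertion by contradiction. If it fails then, by the first part, there are $\eps_n\downarrow0$ and $w_n\in\Psi^{d+\eps_n}\setminus U_\delta(K_d)$ with $\lim_{t\to T^+(w_n)}\Psi(\eta(t,w_n))\ge d-\eps_n$. Proposition \ref{pseudogr} gives $\tfrac{d}{dt}\Psi(\eta(t,w_n))\le-\tfrac{1}{2}\mu(\eta(t,w_n))<0$ for a.e.\ $t$, so $\Psi(\eta(\cdot,w_n))$ is non-increasing and stays in $[d-\eps_n,d+\eps_n]$; by Proposition \ref{deformation} its limit $z_n:=\lim_{t\to T^+(w_n)}\eta(t,w_n)$ is then a critical point with value in $[d-\eps_n,d+\eps_n]$, hence $z_n\in K_d$ for $n$ large. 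Since the trajectory runs from $w_n\notin U_\delta(K_d)$ to $z_n\in K_d$ and $t\mapsto\mathrm{dist}(\eta(t,w_n),K_d)$ is continuous, I can choose $\sigma_n<\sigma_n'$ with $\mathrm{dist}(\eta(\sigma_n,w_n),K_d)=\delta$, $\mathrm{dist}(\eta(\sigma_n',w_n),K_d)=\tfrac{\delta}{2}$ and $\mathrm{dist}(\eta(t,w_n),K_d)\in[\tfrac{\delta}{2},\delta]$ for all $t\in[\sigma_n,\sigma_n']$. As $\|H\|\le1$, $\sigma_n'-\sigma_n\ge\|\eta(\sigma_n',w_n)-\eta(\sigma_n,w_n)\|\ge\tfrac{\delta}{2}$, while integrating the differential inequality over $[\sigma_n,\sigma_n']$ and using that the total drop of $\Psi$ along the whole trajectory is at most $2\eps_n$ yields $\int_{\sigma_n}^{\sigma_n'}\mu(\eta(t,w_n))\,dt\le4\eps_n\to0$. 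Hence (by continuity of $\mu$, Lemma \ref{cont}) there is $s_n\in[\sigma_n,\sigma_n']$ with $\mu(\zeta_n)\to0$, where $\zeta_n:=\eta(s_n,w_n)$, while $\mathrm{dist}(\zeta_n,K_d)\in[\tfrac{\delta}{2},\delta]$. By the definition of $\mu$ there are $\tilde w_n\in S^+$ with $\|\tilde w_n-\zeta_n\|\to0$ and $\partial^-\Psi(\tilde w_n)\to0$; since $\Phi|_\cm$ is coercive, $\wh\Psi$ is uniformly Lipschitz near $\Psi^{d+1}$ (cf.\ the proof of Proposition \ref{lip} together with Proposition \ref{mainprop}), so $\Psi(\tilde w_n)\to d$. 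Thus $(\tilde w_n)$ is a PS-sequence for $\Psi$ at level $d$ with $\mathrm{dist}(\tilde w_n,K_d)\ge\tfrac{\delta}{4}$ for $n$ large.

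Finally I would derive a contradiction from Proposition \ref{discreteness}. Choosing $v_n\in K_d$ with $\|\tilde w_n-v_n\|=\mathrm{dist}(\tilde w_n,K_d)$ (the minimum is attained because the elements of $K$, hence of $K_d$, are $\kappa$-separated), $(v_n)$ is also a PS-sequence since $0\in\partial\Psi(v_n)$; both $(\tilde w_n)$ and $(v_n)$ lie in $\Psi^{d+1}$ for $n$ large and $\limsup_n\|\tilde w_n-v_n\|\le\delta<\rho$, so Proposition \ref{discreteness} forces $\|\tilde w_n-v_n\|\to0$, contradicting $\mathrm{dist}(\tilde w_n,K_d)\ge\tfrac{\delta}{4}$. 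The main obstacle in this scheme is precisely the step extracting $(\tilde w_n)$ and disposing of it: in the periodic case $\Phi$ has no Palais--Smale condition, so $\mu$ need not be bounded below on an annulus around $K_d$ (multi-bubble sequences obstruct this), and the only available substitute for compactness is the quantitative discreteness of PS-sequences provided by Proposition \ref{discreteness}; granting that, the rest is a routine transcription of the corresponding argument in \cite{sw1, sw2}.
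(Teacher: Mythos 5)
Your argument is correct and is essentially the adaptation the paper intends: you rerun the proof of Lemma 2.16 in \cite{sw1} with $\|\nabla\Psi\|$ replaced by $\mu$, using the finiteness of $\Psi(K)$ for the first identity, Proposition \ref{pseudogr} for the decay of $\Psi$ along the flow, Proposition \ref{deformation} to conclude the trajectory ends in $K_d$, the definition of $\mu$ (as in the proof of Proposition \ref{deformation}) to extract a genuine PS-sequence near the annulus, and Proposition \ref{discreteness} for the final contradiction. The only cosmetic point is the claim that nearest points in $K_d$ are attained ($\kappa$-separation does not give proximinality in infinite dimensions), but near-minimizers $v_n\in K_d$ with $\|\tilde w_n-v_n\|\le \mathrm{dist}(\tilde w_n,K_d)+1/n$ serve equally well.
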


The proof requires changes which, in view of the arguments of Proposition \ref{deformation}, are rather obvious (in particular, $\nabla\Psi$ in the definition of $\tau$ in \cite{sw1} should be replaced by $\mu$). 

With all these prerequisites, existence of infinitely many solutions is obtained by repeating the arguments on pp.\ 3817--3818 in \cite{sw1}. Let
\[
c_k := \inf \{d\in\r: \gamma(\Psi^d)\ge k\}, \quad k=1,2,\ldots,
\] 
where $\gamma$ denotes Krasnoselskii's genus \cite{st}. Using the flow $\eta$ and Proposition \ref{deff} one shows $K_{c_k}\ne 0$ and $c_k<c_{k+1}$ for all $k$. This contradicts our assumption that there are finitely many geometrically distinct solutions. 

\medskip	

Now we turn our attention to Theorem \ref{thm2}. Here there is no $\zn$-symmetry but instead there is a compact embedding $H^1_0(\Omega) \hookrightarrow L^q(\Omega)$ for $q\in[1,2^*)$. Using this, one sees as in the proof of Theorem 3.1 in \cite{sw1} or Theorem 37 in \cite{sw2} that the ground state exists. The minimizing PS-sequence is extracted by using Ekeland's variational principle in the same way as at the beginning of this section. To obtain infinitely many solutions for odd $f$ one first shows as in \cite[Theorem 3.2]{sw1} (or in \cite[Section 4.2]{sw2}) that $\Psi$ satisfies the PS-condition. Now a standard minimax argument as in \cite[Theorem 3.2]{sw1} can be employed. Note that with the aid of the vector field $H$ and suitable cutoff functions one can construct a deformation in the usual way as e.g. in \cite{st} (see also \cite{ch}). We leave the details to the reader.

\end{document}